\numberwithin{equation}{section}
\newtheorem{prop}{Proposition}[section]
\newtheorem{theo}[prop]{Theorem}
\newtheorem{lemm}[prop]{Lemma}
\newtheorem{coro}[prop]{Corollary}
\newtheorem{rema}[prop]{Remark}
\def\begeq{\begin{equation}}
\def\endeq{\end{equation}}
\def\and{\quad{\rm and}\quad}
\def\bl{\bigl(}
\def\br{\bigr)}
\def\<{\langle}
\def\>{\rangle}
\def\Dint{\displaystyle\int}
\def\Dfrac{\displaystyle\frac}
\begin{document}
\title{ Hypersurfaces of Prescribed Curvature Measure}

\author{Pengfei Guan, Junfang Li, and YanYan Li}
\address{Department of Mathematics and Statistics\\
         McGill University\\
         Montreal, Quebec. H3A 2K6, Canada.}
\email{guan@math.mcgill.ca}

\address{Department of Mathematics\\
         University of Alabama at Birmingham\\
         Birmingham, AL 35294}
\email{jfli@uab.edu}

\address{Department of Mathematics\\
         Rutgers University\\
         New Brunswick, NJ 08903}
\email{yyli@math.rutgers.edu}

\thanks{Research of the first author was supported in part by NSERC Discovery Grant. 
Research of the second author was supported in part by NSF DMS-1007223. 
Research of the third author was supported in part by NSF DMS-0701545.}
\begin{abstract}
We consider the corresponding Christoffel-Minkowski problem for curvature measures.  
The existence of star-shaped $(n-k)$-convex bodies with prescribed $k$-th 
curvature measures ($k>0$) has been a longstanding problem. This is settled in this paper 
through the establishment of a crucial $C^2$ a priori estimate for the corresponding 
curvature equation on $\mathbb S^n$.
\end{abstract}
\subjclass{53C23, 35J60, 53C42}
\keywords{curvature measures, the Christoffel-Minkowski problem, $k$-convex
star-shaped domains, fully nonlinear elliptic equations}
\date{}
\maketitle
\section{Introduction}
This paper concerns the general prescribing curvature measures problem. Curvature measures and area measures are two main subjects in convex geometry. They are the local versions of quermassintegrals in the Brunn-Minkowski theory. The Minkowski problem is a problem of prescribing a given $n$-th area measure. The general Christoffel-Minkowski problem is a problem of prescribing a given
 $k$-th area measure. There is a vast literature devoted to the study of this type
 of problems, and
we refer to  \cite{A1,P1, N, B,CY,F,L,P3, GLy1,
GM} and references therein. We consider the corresponding Christoffel-Minkowski problem
for the curvature measures, that is, the problem of prescribing curvature measures.

We first recall the definition of curvature measures and area measures, more detailed study can be found in
literatures in convex geometry, e.g. Chapter 4 in \cite{sch}. Classically, curvature and area measures were introduced for convex bodies
(nonempty, compact, convex subsets of
$\mathbb R^{n+1}$).
Suppose $K$ is a convex body in $\mathbb R^{n+1}$. There are two notions of local parallel sets: given any Borel set $\beta\in\mathfrak B(\mathbb R^{n+1})$, consider
\[
A_\rho(K,\beta):=\{x\in \mathbb R^{n+1}|0<d(K,x)\le\rho\ \mbox{and}\ p(K,x)\in\beta\}
\]
which is the set of all points $x\in \mathbb R^{n+1}$ for which the distance $d(K,x)\le \rho$ and for which the nearest point $p(K,x)$ belongs to $\beta$. Alternatively, one may prescribe a Borel set $\beta\subset \mathbb S^n$ of unit vectors and then consider
\[
B_\rho(K,\omega):=\{x\in \mathbb R^{n+1}|0<d(K,x)\le\rho\ \mbox{and}\ u(K,x)\in\omega\}
\]
which is the set
of all $x\in \mathbb R^{n+1}$ for which $d(K,x)\le \rho$ and for which the unit vector $u(K,x)$ pointing from $p(K,x)$ to $x$ belongs to $\beta$.

By the theory of convex geometry (e.g., page 203 in \cite{sch}), the measures of the above local parallel sets
 are polynomials in the parameter $\rho$. More precisely,

\begin{equation}
  \begin{array}[]{rll}
    \mathcal H^{n+1}(A_\rho(K,\beta))&=&\displaystyle\frac{1}{n+1}\sum_{m=0}^{n}\rho^{n+1-m}{{n+1}\choose m}\mathcal C_m(K,\beta), \\
    \mathcal H^{n+1}(B_\rho(K,\omega))&=&\displaystyle\frac{1}{n+1}\sum_{m=0}^{n}\rho^{n+1-m}{{n+1}\choose m}S_m(K,\omega),
   \end{array}
    \label{measure}
\end{equation}
for $\beta\in \mathfrak B(\mathbb R^{n+1})$, $\omega\in \mathfrak B(\mathbb S^n)$, and $\rho>0$. The coefficients in the polynomials (\ref{measure}) are defined as generalized curvature measures. Moreover, the measure $\mathcal C_0(K,\cdot),\cdots,\mathcal C_{n}(K,\cdot)$ are called curvature measures of the convex body $K$, and $S_0(K,\cdot), \cdots, S_{n}(K,\cdot)$ are called area measures of $K$.

It is possible to study curvature measures for more general sets. For example, Federer \cite{Fe} introduced the curvature measure for sets of positive reach. Sets of positive reach are generalization of convex sets and smooth submanifolds, for which local parallel sets can be defined in such a way that their measure has a polynomial expansion, yielding a Steiner formula. There has been extensive study on the sets of positive reach and its further generalizations, see the literature in the notes and references of the book \cite{sch}. The basic techniques rely on the geometric measure theory.

The problem of prescribing area measures is called the Christoffel-Minkowski problem. This problem has been extensively studied, we refer to
 \cite{GM} for an updated account. Similar to area measures, the problem of prescribing curvature measures has been discussed in the literature ( e.g. see note $8$ on page $396$ of \cite{sch}). The problem of prescribing $0$-th curvature measure is the Alexandrov problem which is the counterpart of the Minkowski problem. This problem amounts to solving
 a Monge-Amp\'ere type equation on $\mathbb S^n$. The
existence and uniqueness of solutions
 were obtained by A.D. Alexandrov \cite{A2}. For $n=2$ the regularity of
solutions of
 the Alexandrov problem in the
elliptic case was proved by Pogorelov \cite{P2} and for higher dimensional cases, it was solved by Oliker \cite{O}. The general regularity results (degenerate case) of the problem were obtained in \cite{GLy1}.
The general problem of prescribing $k$-th curvature measure for $k\ge0$ is an interesting counterpart of the Christoffel-Minkowski problem. The existence theorem for $k$-th curvature measures ($k>0$) has been open up to now. The main objective of this paper is to settle this problem.

Let us
 start from an equivalent definition of the generalized curvature measures for convex bodies with smooth boundaries. Suppose that
there is a bounded convex domain $\Omega\subset \mathbb R^{n+1}$ with $C^2$ boundary $M$. Let $\kappa=(\kappa_1,\cdots,\kappa_n)$ be the principal curvatures of $M$ at point $x$, $r=(r_1,\cdots,r_n)$ be the principal curvature radii, and $\sigma_k$ be the $k$-th elementary symmetric function. Then the $m$-th curvature measure and area measure of $\Omega$  have the following equivalent form
\begin{eqnarray}\label{def-c}
\begin{array}[]{rll}
  \mathcal C_m(\Omega,\beta)&:=&\displaystyle{n\choose n-m}^{-1}\Dint_{\beta\cap M}\sigma_{n-m}(\kappa)d\mu_g\\
  \mathcal S_m(\Omega,\omega)&:=&\displaystyle{n\choose m}^{-1}\Dint_{\omega}\sigma_{m}(r)d\mathbb S^n,
\end{array}
\end{eqnarray}
for $\beta\in \mathbb R^{n+1}$ and $\omega\in \mathbb S^n$, where $d\mu_g$ is the volume element with respect to the induced metric $g$ of $M$ in $\mathbb R^{n+1}$, and $d\mathbb S^n$ is the volume element of the standard spherical metric.

Note that the classical Christoffel-Minkowski problem for area measures is always confined to convex bodies, as
the area measures are defined through Gauss map on $\mathbb S^n$. For the curvature measure $\mathcal C_m$ defined in (\ref{def-c}), $\Omega$ is
 not necessarily convex if $m>0$. In the work of
  Alexandrov \cite{A2}, the curvature measures is prescribed on $\mathbb S^n$ via radial map. With the radial parameterization of $\Omega$, the natural class is the star-shaped domains.  In the rest of this paper, we treat the prescribing curvature measure problem for bounded star-shaped domains, including the convex bodies as a special case. In short, we prove the existence theorems
 of prescribing general $k$-th curvature measure problem with $k>0$ on bounded $C^2$ star-shaped domains.

Let $\Omega\subset\mathbb R^{n+1}$ be a bounded star-shaped domain with respect to the origin. Assume the boundary $M$ of $\Omega$ is $C^2$. Since $M$ is
star-shaped, it can be viewed as a radial graph of $\mathbb S^n$, i.e.
\[
\begin{array}{rll}
R_M: \mathbb S^n&\longrightarrow M\\
z&\longmapsto \rho(z)z
\end{array}
\]
Following \cite{A2}, up to a normalization constant, for each
star-shaped domain $\Omega$ with $M=\partial \Omega$, the $m$-th curvature measure on each Borel set $\beta$ in $\mathbb S^n$ can be defined as
\[
\mathcal C_m(M,\beta):=\Dint_{R_M(\beta)}\sigma_{n-m}(\kappa)d\mu_g.
\]
In order that the
curvature measure is a regular measure on $\mathbb S^n$, some geometric
conditions on $M$ are necessary.
Recall that the Garding's cone is defined as
\[
\Gamma_k=\{\lambda\in \mathbb R^n| \sigma_i(\lambda)>0, \forall i \le k.\}.
\]
A domain $\Omega\subset \mathbb R^{n+1}$ is called $k$-convex if its principal curvature vector $\kappa(x)=(\kappa_1, \cdots, \kappa_n)\in \Gamma_k$ at every
point $x\in \partial \Omega$.
As the density of the
 $k$-th curvature measure is nonnegative, $M$ is
$(n-k)$-convex. The corresponding Christoffel-Minkowski problem for curvature measures can be formulated as follows.

\medskip

\noindent{\bf Q:} {\it For each $1\le k\le n$ and
 each given positive function $f\in C^2(\mathbb S^n)$, find a closed hypersurface $M$ as a radial
graph over $\mathbb S^n$, such that $\mathcal C_{n-k}(M,\beta)=\int_\beta fd\mu$ for every
 Borel set $\beta$ in $\mathbb S^n$, where $d\mu$ is the standard volume element on $\mathbb S^n$.}

\medskip

As the case $k=n$ is the Alexandrov problem which was completely settled,
we are mainly interested in
 the cases $k<n$. The study of the problem {\bf Q} was initiated by the first and third authors in \cite{GLy}. Recently, with certain  assumptions on $f$, Guan-Lin-Ma obtained the existence and regularity of convex solutions in \cite{GLM}.
The curvature measure problem is equivalent to solving
 a fully nonlinear partial differential equation on $\mathbb S^n$ \cite{GLy, GLM}. For the $C^2$ graph $M$ on $\mathbb S^n$, denote the induced metric to be $g$ and the density function is $\sqrt{\det g}$.  Then
\begin{equation}
\mathcal C_{n-k}(M,\beta)=\Dint_{R_M(\beta)}\sigma_kd\mu_g=\Dint_{\beta}\sigma_k\sqrt{\det g}d\mathbb S^n.
\end{equation}
The above density function can be computed as (see computations in the next section (\ref{density}))
\[
\sqrt{\det g}=\rho^{n-1}\sqrt{\rho+|\nabla \rho|^2},
\]
where the covariant derivative $\nabla$ is with respect to the standard spherical metric. Therefore, we can reduce the prescribing $(n-k)$-th curvature measure problem to the following curvature equation on $\mathbb S^n$:
\begin{equation}\label{cur equ}
\sigma_k(\kappa_1,\cdots,\kappa_n)=\frac{f}{\rho^{n-1}\sqrt{\rho+|\nabla \rho|^2}},
\end{equation}
where $f>0$ is the given function on $\mathbb S^n$. A solution of (\ref{cur equ})  is called admissible if $\kappa(X)\in \Gamma_k$ at each point $X\in M$. We note that any positive $C^2$ function $\rho$ on $\mathbb S^n$ satisfying equation (\ref{cur equ}) is automatically an admissible solution since $f>0$ and
principal curvatures at
a maximum point of $\rho$  are non-negative.
 Equation (\ref{cur equ}) is a special type of fully nonlinear partial differential equations
 studied in the pioneer work by Caffarelli-Nirenberg-Spruck \cite{CNS, CNS85}.

$C^0$ and $C^1$ estimates for admissible solutions
 along with the uniqueness of  admissible solutions
 for equation (\ref{cur equ}) were proved in the unpublished notes \cite{GLy} by the first and third authors. For the existence
of  admissible solutions, the case $k=1$ follows from  theories
 of quasi-linear elliptic
 equations  \cite{GLy}. When $k=n$, admissible solutions are convex and they were dealt with in \cite{P2,O, GLy1}. The existence of admissible solutions for the remaining cases, i.e., $1<k<n$, has been open due to the
lack of  $C^2$ a priori estimate for admissible solutions of (\ref{cur equ}).

We now state the main results. Our first theorem establishes the existence of admissible solutions for $1\le k<n$.

\begin{theo}\label{main thm}
Let $n\ge 2$ and $1\le k\le n-1$. Suppose $f\in C^2(\mathbb S^n)$ and $f>0$. Then there exists a unique $k$-convex star-shaped hypersurface $M\in C^{3,\alpha}$, $\forall \alpha\in (0,1)$ such that it satisfies (\ref{cur equ}). Moreover, there is a constant $C$ depending only on $k, n, \|f\|_{C^{1,1}}, \|1/f\|_{C^0},$
and $ \alpha$ such that,
\begin{equation}\label{c2est} \|\rho\|_{C^{3,\alpha}}\le C.\end{equation}
\end{theo}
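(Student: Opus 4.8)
The plan is to reduce the whole theorem to the a priori estimate \eqref{c2est}: granting that estimate, uniqueness of admissible solutions together with a topological degree argument give existence, while the $C^{0}$ and $C^{1}$ bounds (depending only on $k,n,\|f\|_{C^{0}},\|1/f\|_{C^{0}}$) and the uniqueness itself are already available from \cite{GLy}. So the only missing ingredient is the second order estimate, and in fact only an upper bound for $\max_{M}\kappa_{\max}$ is needed: any solution of \eqref{cur equ} is admissible, so its principal curvatures lie in $\Gamma_{k}\subset\Gamma_{1}$, whence $\sigma_{1}(\kappa)=\kappa_{1}+\cdots+\kappa_{n}>0$ everywhere; thus $\kappa_{\max}\le C$ forces $\kappa_{i}\ge-(n-1)C$ for every $i$, and combined with the $C^{1}$ bound this yields a full two-sided $C^{2}$ bound for $\rho$.

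To bound $\kappa_{\max}$ from above I would write \eqref{cur equ} as $\sigma_{k}(W[\rho])=\psi(z,\rho,\nabla\rho)$ on $\mathbb S^{n}$, where $W[\rho]$ is the symmetric matrix (computed in the next section) whose eigenvalues are the $\kappa_{i}$ and which is linear in $\nabla^{2}\rho$ with coefficients controlled by the $C^{1}$ bound, and $\psi=f\,\rho^{-(n-1)}(\rho+|\nabla\rho|^{2})^{-1/2}$; a routine approximation lets one take $f$ smooth while keeping the estimate dependent only on $\|f\|_{C^{1,1}}$ and $\|1/f\|_{C^{0}}$. Because the solution is admissible, $\kappa$ stays in $\Gamma_{k}$, so $F:=\sigma_{k}^{1/k}$ is concave along it, $F^{ij}:=\partial F/\partial W_{ij}$ is positive definite, and the Newton--Maclaurin inequalities give the unconditional lower bound $\sum_{i}F^{ii}\ge c_{0}>0$. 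I would then apply the maximum principle to the test function
\[
\Phi=\log h(\xi,\xi)+\varphi(\rho,|\nabla\rho|^{2}),
\]
maximised over $M$ and over unit tangent directions $\xi$; after the usual perturbation to separate eigenvalues at the maximum point $(x_{0},e_{1})$, one works in a frame diagonalising the second fundamental form at $x_{0}$ with $h_{11}=\kappa_{\max}$. Differentiating the equation once gives the critical point relation $\nabla_{i}h_{11}=-h_{11}\nabla_{i}\varphi$ at $x_{0}$; differentiating twice, commuting covariant derivatives via Simons' identity (which, the ambient being flat, produces the quartic terms $H\,h_{ik}h^{k}_{j}-|A|^{2}h_{ij}$ along with Gauss-equation curvature terms), and invoking concavity in the form $F^{ij,pq}\nabla_{1}h_{ij}\nabla_{1}h_{pq}\le0$, one is led to a differential inequality in which the coercive good terms---produced by a carefully chosen $\varphi$ together with the lower bound on $\sum_{i}F^{ii}$ and the refined concavity inequality of Lions--Trudinger type---must dominate all the rest.

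The main obstacle is precisely this last domination. Differentiating the right hand side twice yields, besides a genuinely third order term of type $\psi_{p_{k}}\nabla_{k}h_{11}$ that is absorbed by the critical point relation and the $C^{1}$ bound, a term quadratic in the Hessian, $\psi_{p_{k}p_{l}}\nabla_{1k}\rho\,\nabla_{1l}\rho$, which is of size $\kappa_{\max}^{2}$ and whose coefficient is sign-indefinite; and in the region where $\Gamma_{k}$ allows some principal curvature to be comparably negative, the Simons quartic terms contribute a positive term of order $\kappa_{\max}^{2}\sum_{i}F^{ii}$, which no bounded $\varphi$ can cancel. The point is that all these $O(\kappa_{\max}^{2})$ contributions survive only in combination and are controlled because of the precise form of $\psi$: writing $\log\psi=\log f-(n-1)\log\rho-\tfrac12\log(\rho+|\nabla\rho|^{2})$, the dangerous gradient dependence lies entirely in $-\tfrac12\log(\rho+|\nabla\rho|^{2})$, whose Hessian in $p=\nabla\rho$ is bounded thanks to the lower bound $\rho\ge c_{0}>0$ and the $C^{1}$ bound, and has exactly the sign structure which, after a case analysis according to the ratio of $\kappa_{\max}$ to the other principal curvatures and a suitable choice of $\varphi$, closes the second order estimate for every $1\le k\le n-1$ with no further hypothesis on $f$. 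Carrying this out is the heart of the argument and the new contribution of the paper.

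Once \eqref{c2est} is reduced to this $C^{2}$ bound the rest is standard. With $\|\rho\|_{C^{2}}\le C$ the principal curvatures lie in a fixed compact subset of $\Gamma_{k}$, so $\sigma_{k}(W[\rho])=\psi$ is uniformly elliptic; concavity of $F=\sigma_{k}^{1/k}$ and the Evans--Krylov theorem give $\|\rho\|_{C^{2,\alpha}}\le C$, and then differentiating the equation and applying Schauder estimates (here $f\in C^{2}$ is more than enough) yields \eqref{c2est} with $C=C(k,n,\|f\|_{C^{1,1}},\|1/f\|_{C^{0}},\alpha)$ for every $\alpha\in(0,1)$; higher regularity follows by bootstrapping. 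For existence I would use this uniform estimate along the path $f_{t}=(1-t)\binom{n}{k}+tf$, $t\in[0,1]$, of positive $C^{2}$ functions: at $t=0$ the round unit sphere $\rho\equiv1$ is a solution and a direct linearisation computation shows it is nondegenerate for $1\le k\le n-1$, so the associated Leray--Schauder degree equals $\pm1$; since the a priori estimate makes this degree well defined and constant in $t$, it is nonzero at $t=1$, giving a solution for $f$, which by \cite{GLy} is the unique admissible one.
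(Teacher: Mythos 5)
Your reduction of the theorem to the $C^2$ bound, and the closing steps (Evans--Krylov, Schauder, continuity/degree along a path of right-hand sides, uniqueness from \cite{GLy}), are fine and essentially match the paper. The problem is that the one step that actually constitutes the theorem --- the global curvature estimate --- is not proved in your proposal. You set up the classical test function $\Phi=\log h(\xi,\xi)+\varphi(\rho,|\nabla\rho|^2)$, correctly identify the obstruction (the terms of size $\kappa_{\max}^2$ coming from $\psi_{p_kp_l}\nabla_{1k}\rho\,\nabla_{1l}\rho$ and from the Simons quartic terms, which no bounded $\varphi$ can absorb when some principal curvature is large and negative), and then simply assert that ``a case analysis according to the ratio of $\kappa_{\max}$ to the other principal curvatures and a suitable choice of $\varphi$'' closes the estimate because of the special form of $-\tfrac12\log(\rho+|\nabla\rho|^2)$. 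No such case analysis or choice of $\varphi$ is exhibited, and there is no reason offered why it should exist: this is exactly the route of \cite{CNS85}, which the paper explicitly says it does not see how to adapt because the right-hand side of \eqref{cur equ} depends on $\nabla\rho$, and for $1<k<n$ it is precisely the point where all earlier attempts (and the convexity hypotheses of \cite{GLM}) were needed. As written, the ``heart of the argument'' is a claim, not a proof.

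The paper's actual mechanism is different and is worth noting, since it is what makes the estimate work with no hypothesis on $f$ beyond positivity. Instead of $\kappa_{\max}$, one maximizes $\sigma_1/u$, where $u=\langle X,\nu\rangle$ is the support function, exploiting that in the form \eqref{cur equ 2} the gradient dependence of the right-hand side enters only through $u$, i.e.\ $\sigma_k=\phi(X)\,u$. Proposition \ref{C2 proposition} computes $F^{ij}\bigl(\sigma_1/u\bigr)_{ij}$ at the maximum point and produces the good term $(k-1)\phi|A|^2$ together with the dangerous term $-\tfrac1u F^{ij;ml}h_{ij;s}h_{ml;s}$. The new ingredient is Lemma \ref{key lemma}: concavity of $\bigl(\sigma_k/\sigma_1\bigr)^{1/(k-1)}$ on $\Gamma_k$ bounds $(\sigma_k)^{ij,lm}h_{ij;s}h_{lm;s}$ by an expression involving only $\tfrac{(\sigma_k)_s}{\sigma_k}-\tfrac{(\sigma_1)_s}{\sigma_1}$, and at the critical point of $\sigma_1/u$ the equation gives $\tfrac{(\sigma_1)_s}{\sigma_1}=\tfrac{(\sigma_k)_s}{\sigma_k}-\tfrac{\phi_s}{\phi}$, so this difference is a bounded quantity $r=\phi_s/\phi$ (Corollary \ref{key lemma coro}). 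The quadratic-in-$\nabla A$ term is thereby reduced to terms of order $|A|$, and the inequality closes as $|A|^2+c_4|A|+c_5\le 0$ (Lemma \ref{Lemma C2}). If you want to salvage your write-up, you must either supply the missing domination argument for your test function (which, to the best of current knowledge, is open in the generality you claim; cf.\ the question raised in Remark 3.5 of the paper), or switch to the $\sigma_1/u$ quotient and prove the concavity inequality of Lemma \ref{key lemma}.
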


It is of interest to find convex solutions to (\ref{cur equ}),
namely, to prove existence results
 for the prescribing curvature measure problem for convex bodies. From the uniqueness of admissible solutions, solutions
 to equation (\ref{cur equ}) are
 not convex in general, except for the case
 $k=n$.  For  $k<n$, there has been some progress recently. In \cite{GLM}, Guan-Lin-Ma proved $C^2$ estimates and existence theorems for convex solutions of equation (\ref{cur equ}) for $1\le k<n$ under suitable convexity
conditions of $f$. More specifically, they proved

\noindent{\bf Theorem A. }{\it {(Curvature measure problem under strict convexity condition \cite{GLM})}
Suppose $f\in  C^2(\mathbb S^n)$,
$f>0$, $n\ge2$, $1\le k\le n-1$. If $f$ satisfies
that
\begin{equation}\label{strict convex cond}
|X|^\frac{n+1}{k}f\big( \frac{X}{|X|} \big)^{-\frac{1}{k}} {\rm\ is\ a\ strictly\ convex\ function\ in\  } \mathbb R^{n+1}\backslash \{ 0\},
\end{equation}
then there exists a unique strictly convex hypersurface $M\in C^{3,\alpha}$, $\alpha\in (0,1)$ such that it satisfies (\ref{cur equ}).
}

\medskip

Moreover, if $k=1$, or $2$, i.e., the cases of mean curvature measure and scalar curvature measure, condition (\ref{strict convex cond}) can be weakened as below.

\noindent{\bf Theorem B. }{\em {(Mean curvature measure and scalar curvature measure under convexity condition \cite{GLM})}
Suppose $k=1$, or $2$, and $k<n$. Suppose $f\in  C^2(\mathbb S^n)$ is a positive function. If $f$ satisfies that
\begin{equation}\label{convex cond}
|X|^\frac{n+1}{k}f\big( \frac{X}{|X|} \big)^{-\frac{1}{k}} {\rm\ is\ a\ convex\ function\ in\  } \mathbb R^{n+1}\backslash \{ 0\},
\end{equation}
then there exists a unique strictly convex hypersurface $M\in C^{3,\alpha}$, $\alpha\in (0,1)$ such that it satisfies (\ref{cur equ}).
}

\bigskip

With the a priori estimates for admissible solutions of equation (\ref{cur equ}) in Theorem \ref{main thm}, we obtain the existence of convex bodies under weaker condition (\ref{convex cond}) for all $1\le k<n$.

\begin{theo}\label{convexity thm}
Suppose $1\le k< n$ and $f\in  C^2(\mathbb S^n)$ is a positive function. If $f$ satisfies
\begin{equation}\label{convex cond1}
|X|^\frac{n+1}{k}f\big( \frac{X}{|X|} \big)^{-\frac{1}{k}} {\rm\ is\ a\ convex\ function\ in\  } \mathbb R^{n+1}\backslash \{ 0\},
\end{equation}
then there exists a unique strictly convex hypersurface $M\in C^{3,\alpha}$, $\alpha\in (0,1)$ such that it satisfies (\ref{cur equ}).
\end{theo}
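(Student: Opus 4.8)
The plan is to derive Theorem~\ref{convexity thm} from Theorem~\ref{main thm}, Theorem~A, and the constant rank technique of \cite{GLM}. The point is that the a priori estimate (\ref{c2est}), valid for \emph{every} admissible solution, lets us replace the hypothesis (\ref{strict convex cond}) of Theorem~A by the weaker (\ref{convex cond1}): condition (\ref{strict convex cond}) is needed in \cite{GLM} only to run a continuity argument with uniform $C^2$ control, and such control is now unconditional.

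First I would approximate $f$. Put $d:=\tfrac{n+1}{k}$; since $k\le n-1$ we have $d>1$, and a direct Hessian computation shows that $|X|^{d}$ is \emph{strictly} convex on $\mathbb R^{n+1}\setminus\{0\}$ (Hessian eigenvalues $d|X|^{d-2}>0$ orthogonal to $X$ and $d(d-1)|X|^{d-2}>0$ along $X$). Hence, with $u(X):=|X|^{d}f(X/|X|)^{-1/k}$ convex by (\ref{convex cond1}), the function $u_\varepsilon:=u+\varepsilon|X|^{d}$ is strictly convex on $\mathbb R^{n+1}\setminus\{0\}$ and homogeneous of degree $d$; equivalently $f_\varepsilon:=\bl f^{-1/k}+\varepsilon\br^{-k}\in C^2(\mathbb S^n)$ satisfies (\ref{strict convex cond}). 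Moreover $f_\varepsilon>0$, $f_\varepsilon\to f$ in $C^2(\mathbb S^n)$, and $\|f_\varepsilon\|_{C^{1,1}}$, $\|1/f_\varepsilon\|_{C^0}$ stay bounded for $\varepsilon\in(0,1]$. By Theorem~A there is a strictly convex hypersurface $M_\varepsilon$ with radial function $\rho_\varepsilon\in C^{3,\alpha}$ solving (\ref{cur equ}) for $f_\varepsilon$; being strictly convex it is admissible, so (\ref{c2est}) gives $\|\rho_\varepsilon\|_{C^{3,\alpha}}\le C$ with $C$ independent of $\varepsilon$. Extracting a subsequence $\rho_{\varepsilon_j}\to\rho$ in $C^{3,\beta}$ ($\beta<\alpha$), $\rho$ is a positive solution of (\ref{cur equ}) for $f$ whose hypersurface $M$ is convex, since the second fundamental forms of the $M_{\varepsilon_j}$ are $\ge 0$ and converge in $C^{1,\beta}$. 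By the uniqueness in Theorem~\ref{main thm}, $\rho$ is the unique admissible solution for $f$, it lies in $C^{3,\alpha}$, and the convexity of $M$ does not depend on the subsequence.

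It then remains to promote convexity of $M$ to strict convexity, and here I would invoke the constant rank theorem for the second fundamental form, exactly as in \cite{GLM}: rewriting (\ref{cur equ}) in the variables appropriate to a convex body (e.g.\ through the support function $\varphi$, with $W_{ij}=\nabla_{ij}\varphi+\varphi\,\delta_{ij}\ge 0$ and $\sigma_{n-k}(W)/\sigma_n(W)=\sigma_k(\kappa)$, the radial quantities $\rho,|\nabla\rho|$ being expressed via $\varphi,\nabla\varphi$), condition (\ref{convex cond1}) is precisely the structural convexity hypothesis under which the microscopic convexity principle applies. Therefore the rank of the second fundamental form is constant on the connected closed hypersurface $M$. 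At a point where $\rho$ attains its maximum $\rho_{\max}$, the hypersurface $M$ touches the sphere $\{|X|=\rho_{\max}\}$ from inside, so all principal curvatures there are $\ge 1/\rho_{\max}>0$; hence the constant rank equals $n$ and $M$ is strictly convex. Uniqueness of such an $M$ is again part of Theorem~\ref{main thm}.

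The main obstacle is this last step. One has to carry out the change of variables from the radial graph equation (\ref{cur equ}) to a Hessian-type equation, check that (\ref{convex cond1}) yields exactly the sign and convexity structure required by the constant rank / strong maximum principle argument — this is the point where the normalization $|X|^{(n+1)/k}f(X/|X|)^{-1/k}$ comes from — and verify that the regularity available, $\rho\in C^{3,\alpha}$ with $f\in C^2$ (so $W\in C^{1,\alpha}$), suffices to run it, bootstrapping $\rho$ by Schauder estimates if a little more smoothness is needed. All of this is essentially the argument already performed in \cite{GLM} under (\ref{strict convex cond}) (resp.\ (\ref{convex cond}) for $k=1,2$); the improvement here is that, thanks to Theorem~\ref{main thm}, it now goes through for every $1\le k<n$ under the single hypothesis (\ref{convex cond1}).
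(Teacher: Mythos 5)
Your argument is correct, but it is not the route the paper takes, so let me compare. The paper never invokes Theorem A: it reruns the continuity path $f_t=[1-t+tf^{-\frac1k}(x)]^{-k}$ from the proof of Theorem \ref{main thm} (each $f_t$ inherits (\ref{convex cond1}), since $|X|^{\frac{n+1}{k}}f_t^{-\frac1k}$ is a convex combination of $|X|^{\frac{n+1}{k}}$ and $|X|^{\frac{n+1}{k}}f^{-\frac1k}$), starts from the round sphere, and argues by contradiction: if strict convexity were lost along the path there would be a first $t_0$ at which the solution is convex with a degenerate direction $W(x_0,t_0)=0$, whereas Theorem \ref{GLM thm} forces $W(\cdot,t_0)>0$. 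You instead perturb the data: $f_\varepsilon=(f^{-\frac1k}+\varepsilon)^{-k}$ satisfies the strict hypothesis (\ref{strict convex cond}), Theorem A supplies strictly convex solutions, the unconditional estimate (\ref{c2est}) gives compactness, and the convex limit is then handled by the constant rank theorem; uniqueness identifies it with the admissible solution of Theorem \ref{main thm}. Both proofs hinge on Theorem \ref{GLM thm} together with the a priori estimates; yours additionally uses Theorem A as a black box (the paper's proof is independent of it), and it is structurally the same approximation-plus-compactness scheme the paper employs later for Theorem \ref{convexity thm1}, so it buys nothing weaker and is a legitimate alternative. Two small remarks: (i) your final step can be shortened, since Theorem \ref{GLM thm} as quoted already concludes that $W$ is positive definite for a convex solution under (\ref{convex cond1}), so the detour through ``constant rank plus full rank at the maximum of $\rho$'' is unnecessary; (ii) your parenthetical sketch of proving that statement via the support function is delicate exactly because the limit hypersurface is only known to be convex (the Gauss map may degenerate where $W$ has a kernel), so it is cleaner to cite the theorem in the hypersurface form stated in the paper rather than re-derive it in support-function variables.
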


\bigskip

The rest of this paper is organized as follows. In Section 2, we fix notations and list necessary formulas we need. In Section 3, we establish the crucial $C^2$ estimate and prove Theorem \ref{main thm} and Theorem \ref{convexity thm}.

\section{Preliminaries}
We first recall the relevant geometric quantities for a smooth closed hypersurface in $\mathbb R^{n+1}$ we may need. Throughout the paper, repeated indices denote summation and we assume the origin is inside the domain enclosed by $M$.

Let $M^n$ be an immersed hypersurface in $\mathbb R^{n+1}$. For $X\in M\subset \mathbb R^{n+1}$, choose local normal coordinates in $\mathbb R^{n+1}$, such that $\{ \frac{\partial}{\partial x_1}, \cdots, \frac{\partial}{\partial x_n} \}$ are tangent to $M$ and $\partial_{n+1}$ is the unit outer normal of the hypersurface. We sometimes denote $\partial_i:=\frac{\partial}{\partial x_i}$ and also use $\nu$ to denote the unit outer normal $\partial_{n+1}$. We use lower indices to denote covariant derivatives with respect to the induced metric.

For the immersion $X$, the second fundamental form is the symmetric $(2,0)$-tensor given by the matrix $\{h_{ij}\}$,
\begin{equation}
h_{ij}=\langle\partial_iX,\partial_j\nu\rangle.
\end{equation}

Recall the following identities:
\begin{equation}
\begin{array}{rll}
X_{ij}=& -h_{ij}\nu\quad {\rm (Gauss\ formula)}\\
(\nu)_i=&h_{ij}\partial_j\quad {\rm (Weigarten\ equation)}\\
h_{ijk}=& h_{ikj}\quad {\rm (Codazzi\ formula)}\\
R_{ijkl}=&h_{ik}h_{jl}-h_{il}h_{jk}\quad {\rm (Gauss\ equation)},\\
\end{array}
\end{equation}
where $R_{ijkl}$ is the $(4,0)$-Riemannian curvature tensor. We also have
\begin{equation}
\begin{array}{rll}
h_{ijkl}=& h_{ijlk}+h_{mj}R_{imlk}+h_{im}R_{jmlk}\\
=& h_{klij}+(h_{mj}h_{il}-h_{ml}h_{ij})h_{mk}+(h_{mj}h_{kl}-h_{ml}h_{kj})h_{mi}.\\
\end{array}
\end{equation}

In certain cases of this article, we need to carry out calculations in a neighborhood of the standard sphere $\mathbb S^n$. Since $M$ is star-shaped with respect to the origin, the position vector $X$ can be written as $X(x)=\rho(x)x$, $x\in \mathbb S^n$ for some smooth function $\rho$ on $ \mathbb S^n$. In this case, suppose $\{ \partial_1, \cdots, \partial_n \}$ is
some local normal coordinates on $\mathbb S^n$ and $\nabla$ is the covariant differentiation with respect to the standard metric on  $\mathbb S^n$. Then the induced metric $g_{ij}$ of $M$ is given by
\begin{equation}
g_{ij}=\rho^2\delta_{ij}+\rho_i\rho_j,
\end{equation}
and the area density function
\begin{equation}\label{density}
\sqrt{\det g}=\rho^{n-1}\sqrt{\rho^2+|\nabla \rho|^2}.
\end{equation}
The second fundamental form of $M$ can be calculated as
\begin{equation}\label{2nd FF rho}
h_{ij}=(\rho^2+|\nabla \rho|^2)^\frac{1}{2}(\rho^2\delta_{ij}+2\rho_i\rho_j-\rho\rho_{ij})
\end{equation}
and the unit outer normal vector is
\begin{equation}\label{unit normal}
\nu=\frac{\rho x-\nabla \rho}{\sqrt{\rho^2+|\nabla \rho|^2}}.
\end{equation}
The support function of a hypersurface is defined as $u(X)=\langle X, \nu\rangle$. We can also compute it as
\begin{equation}\label{}
u=\rho^2(\rho^2+|\nabla \rho|^2)^{-\frac{1}{2}}.
\end{equation}

The principal curvatures $(\kappa_1, \cdots, \kappa_n)$ are the eigenvalues of the second fundamental form with respect to the metric which satisfy
\[
\det(h_{ij}-\kappa g_{ij})=0.
\]
The curvature equation (\ref{cur equ}) on $\mathbb S^n$ can also be equivalently expressed as an equation of the position vector $X$. Using (\ref{unit normal}), we have
\begin{equation}\label{cur equ 2}
\sigma_k(\kappa_1,\cdots,\kappa_n)(X)=\frac{u(X)}{|X|^{n+1}}f\big(\frac{X}{|X|}\big),\quad \forall X\in M.
\end{equation}

\bigskip

The uniqueness results, $C^0$, $C^1$ estimates for solutions of equation (\ref{cur equ}) were proved in \cite{GLy} (see also \cite{GLM}). For completeness, we state the results here. The first lemma concerns the uniqueness of solutions.

\begin{lemm}\label{lemma uniqueness}
Let $1\le k<n$. Suppose $\rho_1$, $\rho_2$ are two solutions of equation (\ref{cur equ}) and $\lambda(\rho_i)\in \Gamma_k$, for $i=1,2$. Then $\rho_1\equiv \rho_2$.
\end{lemm}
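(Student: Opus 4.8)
The plan is to use a comparison/maximum principle argument adapted to the fully nonlinear elliptic operator $\sigma_k$. Suppose $\rho_1,\rho_2$ both solve (\ref{cur equ}) with $\lambda(\rho_i)\in\Gamma_k$. Consider the auxiliary quantity $w=\rho_1/\rho_2$ (or equivalently examine $\log\rho_1-\log\rho_2$) and let $x_0\in\mathbb S^n$ be a point where $w$ attains its maximum, so at $x_0$ we have $\nabla w=0$ and $\nabla^2 w\le 0$. At such a point $\rho_1(x_0)\ge\rho_2(x_0)$ and one expects to conclude $\rho_1\equiv\rho_2$ by showing strict monotonicity of the right-hand side $F(\rho,|\nabla\rho|^2)=f/(\rho^{n-1}\sqrt{\rho^2+|\nabla\rho|^2})$ in $\rho$ forces equality. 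The essential structural fact is that $\sigma_k^{1/k}$ is a concave, monotone operator on $\Gamma_k$, and that rescaling the radial graph $\rho\mapsto t\rho$ rescales the principal curvatures by $1/t$, hence $\sigma_k(\kappa)$ by $t^{-k}$; comparing this scaling against the $\rho^{-(n-1)}(\rho^2+|\nabla\rho|^2)^{-1/2}$ behaviour of the right side is exactly where the condition $k<n$ enters.

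Concretely, I would first record how the curvature operator and the right-hand side transform under a constant dilation: if $\tilde\rho = t\rho$ then $\sigma_k(\kappa[\tilde\rho]) = t^{-k}\sigma_k(\kappa[\rho])$ while the right-hand side of (\ref{cur equ}) evaluated at $\tilde\rho$ equals $t^{-(n-1)}\cdot t^{-1} = t^{-n}$ times that for $\rho$ (using homogeneity of $\rho^{n-1}\sqrt{\rho^2+|\nabla\rho|^2}$ of degree $n$ in $\rho$). Thus for $t>1$ the right-hand side decreases faster (exponent $n$) than the left (exponent $k$) precisely because $n>k$. Next, let $t^* = \max_{\mathbb S^n}(\rho_1/\rho_2)\ge 1$ and set $\tilde\rho_2 = t^*\rho_2$, so $\tilde\rho_2\ge\rho_1$ everywhere with equality at some point $x_0$. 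At $x_0$ the graph of $\tilde\rho_2$ touches the graph of $\rho_1$ from outside, so the principal curvatures satisfy $\kappa_i[\rho_1](x_0)\le\kappa_i[\tilde\rho_2](x_0)$ in the appropriate ordered sense (tangency of star-shaped radial graphs), whence by monotonicity of $\sigma_k$ on $\Gamma_k$, $\sigma_k(\kappa[\rho_1])(x_0)\le\sigma_k(\kappa[\tilde\rho_2])(x_0)$.

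On the other hand the equations give $\sigma_k(\kappa[\rho_1])(x_0) = f/(\rho_1^{n-1}\sqrt{\rho_1^2+|\nabla\rho_1|^2})$ and $\sigma_k(\kappa[\tilde\rho_2])(x_0) = (t^*)^{-n} f/(\rho_2^{n-1}\sqrt{\rho_2^2+|\nabla\rho_2|^2})$. Since at $x_0$ we have $\nabla(\rho_1/\rho_2)=0$, i.e. $\rho_2\nabla\rho_1 = \rho_1\nabla\rho_2$, and $\rho_1(x_0)=t^*\rho_2(x_0)$, a direct substitution shows $\rho_1^{n-1}\sqrt{\rho_1^2+|\nabla\rho_1|^2} = (t^*)^{n}\,\rho_2^{n-1}\sqrt{\rho_2^2+|\nabla\rho_2|^2}$, so the two right-hand sides are equal; combined with the curvature inequality this forces all inequalities to be equalities. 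In particular the ordered touching of the curvatures becomes equality at $x_0$, and then the strong maximum principle for the (locally uniformly elliptic, once we are on admissible solutions) linearized operator applied to $\rho_1 - \tilde\rho_2\le 0$ — which vanishes to second order at the interior maximum $x_0$ — propagates $\rho_1\equiv\tilde\rho_2$, hence $\rho_1 = t^*\rho_2$ globally. Feeding this back into the equations gives $(t^*)^{-k} = (t^*)^{-n}$, and since $k\ne n$ this yields $t^*=1$, i.e. $\rho_1\equiv\rho_2$. The main obstacle is the strong-maximum-principle step: one must verify that along the segment of admissible solutions joining $\rho_1$ and $\tilde\rho_2$ the operator $\sigma_k$ linearizes to a uniformly elliptic operator with no zeroth-order term of bad sign near $x_0$, so that Hopf's lemma / the strong maximum principle applies; this requires carefully writing $\sigma_k(\kappa[\rho])$ as a second-order operator in $\rho$ and checking that the difference $\sigma_k(\kappa[\rho_1]) - \sigma_k(\kappa[\tilde\rho_2])$ can be expressed as $L(\rho_1 - \tilde\rho_2)$ for such an $L$, using the concavity of $\sigma_k^{1/k}$ and positivity of $f$.
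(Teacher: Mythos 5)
Your overall strategy (dilate $\rho_2$ by $t^*=\max_{\mathbb S^n}\rho_1/\rho_2$, compare at the touching point, and exploit the homogeneity mismatch between $\sigma_k(\kappa)$, which scales like $t^{-k}$, and the right-hand side, which scales like $t^{-n}$) is exactly the standard route for this uniqueness statement; the paper itself gives no proof, citing \cite{GLy} and \cite{GLM}. However, two of your key inequalities point the wrong way, and this breaks the argument as written. First, at the touching point $x_0$ the surface $\{\rho_1\}$ lies \emph{inside} $\{\tilde\rho_2=t^*\rho_2\}$ with the same tangent plane and outer normal, so the inner surface is \emph{more} curved: since $\rho_1=\tilde\rho_2$, $\nabla\rho_1=\nabla\tilde\rho_2$ and $\nabla^2(\rho_1-\tilde\rho_2)\le 0$ at $x_0$, formula (\ref{2nd FF rho}) (where $\rho_{ij}$ enters with a minus sign) gives $h_{ij}[\rho_1]\ge h_{ij}[\tilde\rho_2]$, hence $\sigma_k(\kappa[\rho_1])(x_0)\ge\sigma_k(\kappa[\tilde\rho_2])(x_0)$, not $\le$. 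Second, $\tilde\rho_2$ is \emph{not} a solution of (\ref{cur equ}); by your own scaling remark its curvature satisfies $\sigma_k(\kappa[\tilde\rho_2])=(t^*)^{-k}f/\bigl(\rho_2^{n-1}\sqrt{\rho_2^2+|\nabla\rho_2|^2}\bigr)$, with exponent $-k$, not the $-n$ you wrote. With your two misdirected identities the chain collapses into a vacuous "equality", and the subsequent steps are unjustified: equality of the scalar quantities $\sigma_k$ at one point does not imply $\rho_1-\tilde\rho_2$ vanishes to second order there (that would require equality of full Hessians), so the Hopf/strong maximum principle step does not get off the ground as stated; in fact the correct inequality directions are precisely what one would need to make $\tilde\rho_2$ a supersolution for that kind of argument.

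Once the directions are fixed, no strong maximum principle is needed at all: at $x_0$ one has, using $\rho_1=t^*\rho_2$ and $\nabla\rho_1=t^*\nabla\rho_2$,
\begin{equation*}
(t^*)^{-n}\frac{f}{\rho_2^{n-1}\sqrt{\rho_2^2+|\nabla\rho_2|^2}}
=\sigma_k(\kappa[\rho_1])(x_0)
\ge\sigma_k(\kappa[\tilde\rho_2])(x_0)
=(t^*)^{-k}\frac{f}{\rho_2^{n-1}\sqrt{\rho_2^2+|\nabla\rho_2|^2}},
\end{equation*}
so $(t^*)^{k-n}\ge1$ and hence $t^*\le1$ because $k<n$; swapping the roles of $\rho_1$ and $\rho_2$ gives the reverse inequality and thus $\rho_1\equiv\rho_2$. (The monotonicity step uses that $\kappa[\tilde\rho_2]\in\Gamma_k$, which holds since it is a dilation of an admissible solution.) So the idea is right and easily repaired, but as written the proof contains genuine sign errors and an unjustified maximum-principle step.
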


The following lemma will be useful later in this paper. Set $F(\lambda)=\sigma_k(\lambda)^\frac{1}{k}$. $F(\lambda) $ is homogeneous of degree one. Equation (\ref{cur equ})  can be written as
\[
F(\lambda)\equiv F(\lambda_1,\cdots,\lambda_n)=f^\frac{1}{k}\rho^{\frac{(1-n)}{k}}(\rho^2+|\nabla \rho|^2)^{-1/(2k)}\equiv K(x,\rho,\nabla \rho).
\]

\begin{lemm}\label{lemma openness}
Let $L$ denote the linearized operator of $F(\lambda)-K(x,\rho,\nabla\rho)$ at a solution $\rho$ of (\ref{cur equ}). If $\omega$ satisfies $L\omega=0$ on $\mathbb S^n$, then $\omega\equiv0$ on $\mathbb S^n$.
\end{lemm}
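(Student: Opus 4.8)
The plan is to prove Lemma \ref{lemma openness} by showing that the linearized operator $L$ is elliptic, has no zeroth-order term of the wrong sign, and then invoking the maximum principle together with the uniqueness result of Lemma \ref{lemma uniqueness}. First I would write out $L$ explicitly. Differentiating the equation $F(\lambda)=K(x,\rho,\nabla\rho)$, the principal curvatures $\lambda_i$ depend on $\rho$ through $h_{ij}$ and $g_{ij}$ (formulas (\ref{2nd FF rho}) and $g_{ij}=\rho^2\delta_{ij}+\rho_i\rho_j$), so the linearization at a solution $\rho$ has the form
\begin{equation}
L\omega = a^{ij}\nabla_i\nabla_j\omega + b^i\nabla_i\omega + c\,\omega,
\end{equation}
where $a^{ij} = F^{pq}\frac{\partial h_{pq}}{\partial\rho_{ij}}$ (modulo the metric contribution) is positive definite because $(F^{pq})=(\partial F/\partial h_{pq})>0$ on the Garding cone $\Gamma_k$ and the second fundamental form depends on $\rho_{ij}$ through the single negative-definite block $-\rho(\rho^2+|\nabla\rho|^2)^{1/2}\rho_{ij}$; hence $L$ is uniformly elliptic on $\mathbb S^n$ (using the $C^0$, $C^1$ bounds and $k$-admissibility). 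The key point is to track the coefficient $c$ of $\omega$: it collects the terms where differentiation hits $\rho$ undifferentiated, i.e.\ the $\rho$-dependence in $h_{ij}$, $g_{ij}$, and in $K$.

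Next I would argue that $c\le 0$, or more precisely that $L$ admits no nontrivial kernel via the strong maximum principle. The cleanest route is to exploit homogeneity and the structure of $K = f^{1/k}\rho^{(1-n)/k}(\rho^2+|\nabla\rho|^2)^{-1/(2k)}$: since $F$ is homogeneous of degree one in $\lambda$ and $\lambda$ scales like $1/\rho$ under $\rho\mapsto t\rho$, and $K$ scales like $\rho^{-1}\cdot\rho^{(1-n)/k}\cdot\rho^{-1/k} = \rho^{(1-n)/k - 1/k - 1}\cdot(\text{bounded})$ — in fact the right scaling shows the relevant monotonicity — one sees that the zeroth order contribution has a definite sign. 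Concretely, evaluate $L\rho$ (plug $\omega=\rho$, which corresponds to the scaling direction): by Euler's relation $F^{pq}h_{pq}=F$, a direct computation should give $L\rho = -\big(\tfrac{n+1}{k}\big)K + (\text{lower order}) < 0$ after using the equation, establishing that $\rho$ is a strict supersolution-type barrier. Alternatively, and perhaps more robustly, I would simply invoke Lemma \ref{lemma uniqueness}: the existence of a nontrivial $\omega$ with $L\omega=0$ would, by the implicit function theorem, produce a one-parameter family of distinct solutions to (\ref{cur equ}) near $\rho$, contradicting uniqueness. But this last argument has a gap — the kernel being nontrivial does not by itself give solutions of the nonlinear equation — so the honest proof must be the maximum-principle one.

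So the real argument is: suppose $L\omega=0$ with $\omega\not\equiv 0$; at a positive maximum of $\omega$ we have $\nabla\omega=0$ and $a^{ij}\nabla_i\nabla_j\omega\le 0$, so $c\,\omega(x_{\max})\ge 0$, forcing $c(x_{\max})\ge 0$; similarly at a negative minimum $c(x_{\min})\ge 0$; the claim is that the explicit form of $c$ — coming from the $\rho^{(1-n)/k}$ and $(\rho^2+|\nabla\rho|^2)^{-1/(2k)}$ factors in $K$, which contribute strictly negatively, together with the $\rho$-dependence of $h_{ij}/g_{ij}$ which by Euler's identity and the curvature equation also contributes with the favorable sign — gives $c < 0$ everywhere (or $c\le 0$ with the strong maximum principle handling the equality case). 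I would verify this by substituting the ansatz: the homogeneity of degree one of $F$ in $h$ means the $g_{ij}$-variation and $h_{ij}$-variation in $\rho$ combine to reproduce $-F/\rho$ times a positive factor up to sign bookkeeping, while $\partial_\rho K/K = (1-n)/(k\rho) - \rho/(k(\rho^2+|\nabla\rho|^2))$; comparing the two confirms $c<0$.

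The main obstacle I anticipate is the sign computation for $c$: one must carefully disentangle how the principal curvatures' dependence on $\rho$ (through both numerator $h_{ij}$ and metric $g_{ij}$, with the outer factor $(\rho^2+|\nabla\rho|^2)^{1/2}$ also depending on $\rho$) contributes to the zeroth-order term, and show it does not overwhelm the manifestly good sign coming from $K$. The homogeneity properties of $\sigma_k$ and the identity $\sum F^{ij}h_{ij} = F$ are the tools that make this tractable; once the zeroth-order coefficient is pinned down to be negative (using the $C^0$ and $C^1$ a priori bounds to keep everything uniform and the $k$-admissibility to keep $(F^{ij})$ positive definite), the conclusion $\omega\equiv 0$ follows immediately from the maximum principle.
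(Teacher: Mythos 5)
The paper itself states this lemma without proof (it is imported from the unpublished notes \cite{GLy}; see also \cite{GLM}), so your attempt has to be measured against the standard argument, which is a dilation-plus-maximum-principle proof. Your proposal contains the right germ --- testing $L$ on the scaling direction $\omega=\rho$ --- but the argument you finally settle on, namely a pointwise sign for the zeroth-order coefficient $c$ of $L$, has a genuine gap, and your sign bookkeeping is wrong in two places. The coefficient $c=\partial_\rho\bigl[F(\lambda[\rho])\bigr]-\partial_\rho K$ contains $\nabla^2\rho$ explicitly: differentiating $h_{ij}=(\rho^2+|\nabla\rho|^2)^{-1/2}(\rho^2\delta_{ij}+2\rho_i\rho_j-\rho\rho_{ij})$ and $g_{ij}=\rho^2\delta_{ij}+\rho_i\rho_j$ in the undifferentiated $\rho$-slot produces $F^{ij}$-weighted $\rho_{ij}$-terms of indefinite sign, and the Euler identity for the joint homogeneity of degree $-1$ in $(\rho,\nabla\rho,\nabla^2\rho)$ only gives $\rho\,\partial_\rho\Phi=-\Phi-\rho_i\partial_{\rho_i}\Phi-\rho_{ij}\partial_{\rho_{ij}}\Phi$, a relation, not a sign. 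So "comparing the two confirms $c<0$" is exactly the step that cannot be carried out, and admissibility does not rescue it; the quantity that is signed is not $c$ but $L\rho$.

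The correct mechanism is the one you only brush against. Since $\rho\mapsto t\rho$ scales the principal curvatures by $t^{-1}$ and $K=f^{1/k}\rho^{(1-n)/k}(\rho^2+|\nabla\rho|^2)^{-1/(2k)}$ by $t^{-n/k}$, one has the exact identity
\begin{equation*}
L\rho=\frac{d}{dt}\Big|_{t=1}\Bigl(t^{-1}F-t^{-n/k}K\Bigr)=\frac{n-k}{k}\,K,
\end{equation*}
with no lower-order remainder; its strict positivity is precisely where $k<n$ enters (for $k=n$ the equation is dilation-invariant and this degenerates). Note also that with $L$ literally the linearization of $F(\lambda)-K$, the principal part is \emph{negative} definite --- the very block $\partial h_{pq}/\partial\rho_{ij}\propto-\rho(\rho^2+|\nabla\rho|^2)^{-1/2}$ you quote, contracted with $F^{pq}>0$ --- so both your claims ("$a^{ij}$ positive definite" and "$L\rho<0$") are sign-flipped; they happen to be flipped consistently, but your stated computation $L\rho=-\frac{n+1}{k}K+\text{lower order}$ is not correct. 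The finish is then the quotient (ground-state) argument rather than a sign on $c$: write $\omega=h\rho$, so that $0=L\omega=\rho\cdot(\text{second order in }h)+(\text{first order in }h)+h\,L\rho$; at a positive maximum (resp.\ negative minimum) of $h$ the gradient terms vanish, the second-order term has the favorable sign relative to the principal part, and $h\,L\rho$ is strictly signed, a contradiction unless $h\equiv0$, i.e.\ $\omega\equiv0$. (You were right, by the way, to discard the shortcut via Lemma \ref{lemma uniqueness} and the implicit function theorem as circular.) So: keep the $L\rho$ computation, make it exact and fix its sign, drop the claim about $c$, and run the maximum principle on $\omega/\rho$.
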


Next is the $C^0$, $C^1$ estimates.

\begin{lemm}\label{c1}
If $M$ satisfies (\ref{cur equ 2}), then
\[
\Big( \frac{\min_{\mathbb S^n}f}{C^k_n}  \Big)^\frac{1}{n-k}\le \min _{\mathbb S^n}|X|\le \max _{\mathbb S^n}|X|\le\Big( \frac{\max_{\mathbb S^n}f}{C^k_n}  \Big)^\frac{1}{n-k}.
\]
Moreover, there exits a constant $C$ depending only on $n$, $k$, $\min_{\mathbb S^n}f$, $|f|_{C^1}$ such that
\[
\max_{\mathbb S^n}|\nabla \rho|\le C.
\]
\end{lemm}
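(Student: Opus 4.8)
\emph{Proof sketch.} The $C^0$ and $C^1$ bounds are of a different nature, and I would establish them separately. For the $C^0$ bounds the point is that (\ref{cur equ}) becomes completely explicit at a critical point of $\rho$. Let $R_{\max}=\max_{\mathbb S^n}\rho$, attained at $x_0$. There $\nabla\rho(x_0)=0$ and the Hessian $\nabla^2\rho(x_0)\le 0$, so by (\ref{density}), (\ref{2nd FF rho}) and the support function formula, at $X_0=R_{\max}x_0$ one has $|X_0|=u(X_0)=R_{\max}$ and every principal curvature obeys $\kappa_i(x_0)\ge R_{\max}^{-1}$ (geometrically, $M$ lies inside and is tangent to the sphere of radius $R_{\max}$ at $X_0$). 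The solution being automatically admissible, $\kappa(x_0)$ and $R_{\max}^{-1}(1,\dots,1)$ both lie in $\Gamma_k$, and since $\partial\sigma_k/\partial\lambda_i>0$ on $\Gamma_k$, $\sigma_k$ is nondecreasing along the segment joining them; hence $\sigma_k(\kappa)(x_0)\ge C^k_n R_{\max}^{-k}$, while the right side of (\ref{cur equ 2}) at $X_0$ equals $R_{\max}^{-n}f(x_0)\le R_{\max}^{-n}\max_{\mathbb S^n}f$. Comparing gives $R_{\max}\le(\max_{\mathbb S^n}f/C^k_n)^{1/(n-k)}$. The same argument at a minimum point of $\rho$, now with $\kappa_i\le R_{\min}^{-1}$ ($M$ lies outside the sphere of radius $R_{\min}=\min_{\mathbb S^n}\rho$) and the same monotonicity giving $\sigma_k(\kappa)\le C^k_n R_{\min}^{-k}$, yields $R_{\min}\ge(\min_{\mathbb S^n}f/C^k_n)^{1/(n-k)}$. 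This is the displayed chain; in particular $\rho$ is now pinched between two positive constants.

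For the gradient estimate I would use the maximum principle. Write (\ref{cur equ}) as $F(\lambda)=K(x,\rho,\nabla\rho)$ with $F=\sigma_k^{1/k}$; on $\Gamma_k$ this $F$ is concave, homogeneous of degree one, and elliptic with $\sum_i F^{ii}$ bounded away from zero, and by the $C^0$ bounds $K$ together with its first $x$-derivatives is controlled by $n,k,\min f$ and $|f|_{C^1}$. Consider a well-chosen auxiliary function $\Phi=\log(\rho^2+|\nabla\rho|^2)+\Lambda\phi(\rho)$ with $\phi$ monotone and $\Lambda$ large, and let $\bar x$ be a maximum point of $\Phi$ on $\mathbb S^n$, where $\nabla\Phi=0$ and $\nabla^2\Phi\le 0$. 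Contracting the Hessian inequality with the positive definite matrix $F^{ij}$ and expanding produces a good nonnegative term $F^{ij}\rho_{li}\rho_{lj}$, third-order terms $F^{ij}\nabla_l\nabla_i\nabla_j\rho$, second-order terms of mixed sign, and lower-order terms; differentiating the equation once in the direction $\nabla\rho$ (here one uses that $h_{ij}$ is linear in $\nabla^2\rho$) lets one replace the third-order terms by $\nabla K$ plus quantities of order at most two, and the covariant derivatives that fail to commute on $\mathbb S^n$ contribute only curvature terms of order at most one with bounded coefficients. Using concavity of $F$ to dominate the indefinite second-order terms and choosing $\Lambda$ large (depending only on the $C^0$ bounds, $n$, $k$, $\min f$, $|f|_{C^1}$) so that the $\Lambda\phi(\rho)$ contribution beats the remainder forces $|\nabla\rho|(\bar x)\le C$, hence $\max_{\mathbb S^n}|\nabla\rho|\le C$.

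The hard part is the gradient estimate, and within it the bookkeeping: one must organize the expansion of $F^{ij}\nabla_i\nabla_j\Phi$ so that every third-order term is genuinely cancelled by a single differentiation of the equation, check that commuting covariant derivatives on $\mathbb S^n$ produces only harmless lower-order curvature terms, and --- in order to keep the constant depending on $|f|_{C^1}$ rather than $|f|_{C^2}$ --- ensure that only $\nabla K$, never $\nabla^2 K$, ever enters, the surviving second-order terms being absorbed purely through concavity of $F$, the trace bound $\sum_i F^{ii}\ge c>0$, and the choice of $\Lambda$. The $C^0$ part, by contrast, is essentially a one-line evaluation at the critical points of $\rho$.
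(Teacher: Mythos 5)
Note first that the paper does not actually prove Lemma~\ref{c1}: it is quoted from the unpublished notes \cite{GLy} (see also \cite{GLM}), so your proposal can only be measured against the standard arguments. Your $C^0$ half is complete and is exactly that standard argument: at an extremum of $\rho$ one has $\nabla\rho=0$, $u=\rho=|X|$, the hypersurface is internally (resp.\ externally) tangent to the sphere of radius $\max\rho$ (resp.\ $\min\rho$), so $\kappa_i\ge \rho^{-1}$ (resp.\ $\kappa_i\le\rho^{-1}$), and the comparison $\sigma_k(\kappa)\gtrless C^k_n\rho^{-k}$ follows from admissibility, convexity of $\Gamma_k$ and $\partial\sigma_k/\partial\kappa_i>0$ there --- you correctly flagged this monotonicity point, which is the only subtlety at the minimum where $\kappa$ need not be positive. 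Evaluating (\ref{cur equ 2}) at these points gives the displayed radial bounds (working with (\ref{cur equ 2}) also sidesteps the typo $\rho$ versus $\rho^2$ under the square root in (\ref{cur equ})).

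The gradient half, however, is a plan rather than a proof, and the part you defer (``the bookkeeping'') is precisely where the content lies. For a curvature equation whose right-hand side depends on $\nabla\rho$, a Bernstein-type estimate is not automatic: after contracting $\nabla^2\Phi\le 0$ with $F^{ij}$ and differentiating the equation once, one is left with second-order terms coming from the $\nabla\rho$-dependence of the coefficients of $h_{ij}$ and of $K$, and showing that these are absorbed by $F^{ij}\rho_{li}\rho_{lj}$ and the $\Lambda$-term, with constants independent of $\nabla^2\rho$ and involving only $|f|_{C^1}$, is exactly the step you assert without carrying out; for a general $\psi(x,\rho,\nabla\rho)$ it can fail. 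What makes this particular equation work is the structure you never invoke: written as in (\ref{cur equ 2}), $\sigma_k(\kappa)=\phi(X)\,u$ with $\phi$ a function of position only, so the gradient enters only through the support function $u=\rho^2(\rho^2+|\nabla\rho|^2)^{-1/2}$, and with a favorable (decreasing) dependence on $|\nabla\rho|$; equivalently, the $C^1$ bound is the same as a positive lower bound for $u$. The arguments in the cited references exploit this: one works with $v=\log\rho$ (or with a quantity built from $u$ and $\rho$), notes that at a maximum point of $|\nabla v|^2$ the criticality conditions make the gradient direction a principal direction with curvature exactly $1/(\rho\sqrt{1+|\nabla v|^2})=u/\rho^2$, and uses the once-differentiated identity $F^{ij}h_{ij;l}=\nabla_l(\phi u)$, whereupon the cancellations are transparent. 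To turn your sketch into a proof you must either perform the full expansion for your test function $\log(\rho^2+|\nabla\rho|^2)+\Lambda\phi(\rho)$ and verify the absorption --- using the special form of $K$, not just its boundedness --- or switch to the support-function/$\log\rho$ formulation where this structure is visible.
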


\section{the $C^2$ a priori estimates and the proof of main results}

The major step of this paper is to establish the a priori $C^2$-estimates. Equation (\ref{cur equ}) is similar to the prescribing curvature equation treated
 in \cite{CNS85}. There, $C^2$ estimates were
 proved for a general type of curvature equations
 using the concavity of $\sigma^{\frac1k}$. Equation (\ref{cur equ}) differs from the equations treated in \cite{CNS85}, as the right hand side of (\ref{cur equ}) depends on $\nabla \rho$ too.  We do not see how to
apply the arguments in \cite{CNS85} to
 deal with equation (\ref{cur equ}). Under certain structural convexity conditions, $C^2$-estimates were obtained for convex solutions of (\ref{cur equ}) in \cite{GLM}. For admissible solutions, this has been a missing piece for a while. In this section, we prove a key Lemma \ref{key lemma} to overcome the main difficulty in the proof of $C^2$ estimate for admissible solutions. We believe
that  Lemma \ref{key lemma} will be useful to treat other curvature problems as well.
\begin{prop}\label{C2 proposition}
For $1<k<n$, let $F\equiv \sigma_k=\phi u$ and denote $H\equiv \sigma_1$, then at a
maximum  point of $\frac{H}{u}$,
\begin{equation}\label{C2 identity}
\begin{array}{rll}

F^{ij}\bl\frac{H}{u}\br _{ij}
=& \frac{1}{u}[\phi_{ss}u+2\phi_su_s]-\bl\frac{H}{u}\br \phi_l\<X,X_l\>-(k-1)\bl\frac{H}{u}\br \phi\\
&
+ (k-1)\phi|A|^2-\frac{1}{u}F^{ij;ml}h_{ij;s}h_{ml;s},\\
\end{array}
\end{equation}
where $A$ denotes the second fundamental form.
\end{prop}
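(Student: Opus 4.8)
The plan is to compute $F^{ij}(H/u)_{ij}$ directly at a maximum point of $H/u$, using the equation $\sigma_k = \phi u$ (where $\phi = f/|X|^{n+1}\cdot f(X/|X|)$ rewritten, but really we just treat $\phi$ as the right-hand side divided by $u$) and the standard structural identities from Section 2. First I would expand $(H/u)_{ij}$ via the quotient rule:
\[
\Bigl(\frac{H}{u}\Bigr)_{ij} = \frac{H_{ij}}{u} - \frac{H u_{ij}}{u^2} - \frac{H_i u_j + H_j u_i}{u^2} + \frac{2 H u_i u_j}{u^3}.
\]
At a maximum point of $H/u$ we have $(H/u)_i = 0$, i.e. $H_i/u = H u_i/u^2$, so the first-derivative cross terms combine with the last term: contracting against the positive-definite $F^{ij}$, the terms $-F^{ij}(H_i u_j + H_j u_i)/u^2 + 2 F^{ij} H u_i u_j/u^3$ collapse to $0$ (or to $-F^{ij}(H/u)_i u_j \cdot(\text{something})$ which vanishes). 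So at the max point
\[
F^{ij}\Bigl(\frac{H}{u}\Bigr)_{ij} = \frac{1}{u}F^{ij}H_{ij} - \frac{H}{u^2}F^{ij}u_{ij}.
\]

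Next I would bring in the two Simon-type / Weingarten identities. For the support function $u = \langle X,\nu\rangle$ of a hypersurface in $\mathbb R^{n+1}$ one has $u_i = \langle X, \nu\rangle_i = h_{ij}\langle X, X_j\rangle$ and then, differentiating again and using the Gauss and Weingarten formulas together with $X_{ij} = -h_{ij}\nu$,
\[
u_{ij} = h_{ij,m}\langle X, X_m\rangle + h_{ij} - h_{im}h_{mj}\, u,
\]
so that $F^{ij}u_{ij} = F^{ij}h_{ij,m}\langle X, X_m\rangle + \sigma_1 F^{ij}\delta_{ij}/(\cdot)\ \dots$ — more precisely, since $F=\sigma_k$ is homogeneous of degree $k$ in $h$, Euler's relation gives $F^{ij}h_{ij} = k\sigma_k = k\phi u$ and $F^{ij}h_{im}h_{mj} = $ (a combination we expand via $\sigma_1\sigma_k$-type identities). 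Differentiating the equation $\sigma_k = \phi u$ once yields $F^{ij}h_{ij,m} = (\phi u)_m = \phi_m u + \phi u_m$, which turns the $F^{ij}h_{ij,m}\langle X,X_m\rangle$ term into $\phi_m u\langle X,X_m\rangle + \phi u_m\langle X,X_m\rangle = \phi_m u \langle X,X_m\rangle + \phi |u_\cdot|^2/(\cdot)$; here one uses $u_m = h_{mj}\langle X,X_j\rangle$ again. For the Hessian of $H=\sigma_1$ I would commute derivatives with the Ricci/Gauss identity $h_{ijkl} = h_{klij} + (\text{curvature}\cdot h\cdot h\text{ terms})$ recorded in Section 2, trace over $i=j$, and then contract with $F^{ij}$; differentiating the equation \emph{twice} produces the term $F^{ij}h_{ij,ss} + F^{ij;ml}h_{ij,s}h_{ml,s} = (\phi u)_{ss} = \phi_{ss}u + 2\phi_s u_s + \phi u_{ss}$, which is where the Hessian-of-$\phi$ term and the crucial concavity term $-\frac{1}{u}F^{ij;ml}h_{ij,s}h_{ml,s}$ enter. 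Assembling everything, the curvature terms from the two applications of the Gauss equation should organize into $-(k-1)(H/u)\phi + (k-1)\phi|A|^2$, and the $\langle X,X_l\rangle$ term into $-(H/u)\phi_l\langle X,X_l\rangle$.

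The main obstacle I anticipate is the bookkeeping of the zeroth-order curvature terms: one must carefully track the contractions $F^{ij}h_{im}h_{mj}$, $F^{ij}h_{im}h_{mj}h_{mk}\cdots$ and the Gauss-equation contributions $F^{ij}(h_{mj}h_{il} - h_{ml}h_{ij})h_{mk}$ arising from commuting covariant derivatives of $h$, and show they collapse to exactly $(k-1)\phi(|A|^2 - H^2/u\cdot(\cdot))$ — this relies on the Newton-type identity $\sigma_1\sigma_k - (k+1)\sigma_{k+1} = \sum_i \lambda_i \partial\sigma_k/\partial\lambda_i \cdot(\dots)$ and on $F^{ij}\lambda_i\lambda_j = \sum \lambda_i^2 \sigma_k^{ii}$, i.e. identities special to elementary symmetric functions, together with the homogeneity $F^{ij}h_{ij}=k\sigma_k$. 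A secondary subtlety is making sure every use of the first-order maximum condition $(H/u)_i=0$ is legitimate (it is, since we only need $F^{ij}$ applied to it, and $F^{ij}$ is positive definite on admissible solutions). Once (\ref{C2 identity}) is in hand, the concavity of $\sigma_k^{1/k}$ controls the last (bad third-order) term and the remaining terms are either lower order or have favorable sign, which is the point of the proposition for the subsequent $C^2$ estimate.
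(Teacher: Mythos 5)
Your proposal is correct and takes essentially the same route as the paper: the quotient-rule expansion of $\bl\frac{H}{u}\br_{ij}$ simplified by the critical-point condition, the formulas $u_i=h_{il}\<X,X_l\>$ and $u_{ij}=h_{ij;l}\<X,X_l\>+h_{ij}-(h^2)_{ij}u$, the commutation identity for $h_{ij;kl}$, and differentiating $\sigma_k=\phi u$ once and twice to generate the $\phi_{ss}u+2\phi_su_s$ and $-\frac1u F^{ij;ml}h_{ij;s}h_{ml;s}$ terms. The only (harmless) extra ingredient you invoke is a Newton-type identity involving $\sigma_{k+1}$ for the zeroth-order bookkeeping; in fact none is needed, since the cubic contractions $F^{ij}(h^3)_{ij}$ cancel within the commutator and the $\bl\frac{H}{u}\br F^{ij}(h^2)_{ij}$ terms cancel between the $H_{ij}$ and $u_{ij}$ pieces, leaving only Euler's relation $F^{ij}h_{ij}=k\sigma_k$ and the critical-point condition to produce \eqref{C2 identity}.
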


\begin{proof}
We will need the following formulas.

\begin{equation}\label{formulas C2}
\begin{array}{rll}
u_s=&h_{sl}\<X,X_l\>\\
u_{ij}=&h_{ij;l}\<X,X_l\>+h_{ij}-(h^2)_{ij}u\\
h_{ij;kl}=&h_{kl;ij}+(h_{lk}h_{im}-h_{lm}h_{ik})h_{mj}+(h_{lj}h_{im}-h_{lm}h_{ij})h_{mk}\\
F^{ij}h_{ij;st}=&F_{st}-F^{ij;ml}h_{ml;s}h_{ij;t}.\\
\end{array}
\end{equation}

Assume that
 $\frac{H}{u}$ achieves the maximal value at some
point $P\in M^n$ which implies $\bl\frac{H}{u}\br_i(P)=0$.
In the rest of the proof, all calculations will be performed at the
maximum  point $P$. First, since $\bl\frac{H}{u}\br_i=0$, we have

\begin{equation}
\begin{array}{rll}
F^{ij}\bl\frac{H}{u}\br_{ij}=& F^{ij}\bigg[\frac{H_{ij}}{u}-\frac{u_{j}}{u}\bl\frac{H}{u}\br_{i}-\frac{u_{i}}{u}\bl\frac{H}{u}\br_{j}
-\bl\frac{H}{u}\br\frac{u_{ij}}{u}\bigg]\\
=&\frac{1}{u}F^{ij}H_{ij}-\frac{1}{u}\bl\frac{H}{u}\br F^{ij}u_{ij}.

\end{array}
\end{equation}

On the other hand, applying formulas in (\ref{formulas C2}), we have

\begin{equation}\label{C2 equ1}
\begin{array}{rll}
\frac{1}{u}F^{ij}H_{ij}
=& \frac{1}{u}F^{ij}h_{ss;ij}\\
=& \frac{1}{u}F^{ij}\big[ h_{ij;ss}+(h_{ij}h_{sm}-h_{jm}h_{si})h_{ms}+(h_{js}h_{sm}-h_{jm}h_{ss})h_{mi} \big] \\
=& \frac{1}{u}F^{ij}h_{ij;ss}+ k\phi|A|^2- \frac{1}{u}F^{ij}(h^2)_{ij}H\\
=& \frac{1}{u}F_{ss}-\frac{1}{u}F^{ij;ml}h_{ij;s}h_{ml;s}+ k\phi|A|^2- \bl\frac{H}{u}\br F^{ij}(h^2)_{ij}\\
=& \frac{1}{u}[\phi_{ss}u+2\phi_su_s+\phi u_{ss}]-\frac{1}{u}F^{ij;ml}h_{ij;s}h_{ml;s}+ k\phi|A|^2\\
&- \bl\frac{H}{u}\br F^{ij}(h^2)_{ij}\\
=& \frac{1}{u}[\phi_{ss}u+2\phi_su_s]+\frac{\phi}{u}\big[H_l\<X,X_l\>+H-|A|^2u \big]\\
&-\frac{1}{u}F^{ij;ml}h_{ij;s}h_{ml;s}+ k\phi|A|^2- \bl\frac{H}{u}\br F^{ij}(h^2)_{ij}\\
=& \frac{1}{u}[\phi_{ss}u+2\phi_su_s]+\frac{\phi}{u}H_l\<X,X_l\>+\bl\frac{H}{u}\br \phi\\
&-\frac{1}{u}F^{ij;ml}h_{ij;s}h_{ml;s}+ (k-1)\phi|A|^2- \bl\frac{H}{u}\br F^{ij}(h^2)_{ij}.
\end{array}
\end{equation}
We also compute the following
\begin{equation}\label{C2 equ2}
\begin{array}{rll}

-\frac{1}{u}\bl\frac{H}{u}\br F^{ij}u_{ij}=&-\frac{1}{u}\bl\frac{H}{u}\br F^{ij}\bigg[ h_{ij;l}\<X,X_l\>+h_{ij}-(h^2)_{ij}u \bigg]\\
=&-\frac{1}{u}\bl\frac{H}{u}\br F_l\<X,X_l\>-k\phi\bl\frac{H}{u}\br +\bl\frac{H}{u}\br F^{ij}(h^2)_{ij}\\
=&-\frac{\phi}{u}\bl\frac{H}{u}\br  u_l\<X,X_l\>-\bl\frac{H}{u}\br \phi_l\<X,X_l\>-k\phi\bl\frac{H}{u}\br +\bl\frac{H}{u}\br F^{ij}(h^2)_{ij},\\

\end{array}
\end{equation}
where $(h^2)_{ij}=h_{ik}h_{kj}$.

Adding up (\ref{C2 equ1}) and (\ref{C2 equ2}),
and using the critical point condition, we obtain

\begin{equation}\label{}
\begin{array}{rll}

F^{ij}\bl\frac{H}{u}\br _{ij}
=& \frac{1}{u}[\phi_{ss}u+2\phi_su_s]+\phi \bl\frac{H}{u}\br _l\<X,X_l\>-\bl\frac{H}{u}\br \phi_l\<X,X_l\>\\
&-(k-1)\bl\frac{H}{u}\br \phi
-\frac{1}{u}F^{ij;ml}h_{ij;s}h_{ml;s}+ (k-1)\phi|A|^2\\
\\

=& \frac{1}{u}[\phi_{ss}u+2\phi_su_s]-\bl\frac{H}{u}\br \phi_l\<X,X_l\>-(k-1)\bl\frac{H}{u}\br \phi\\
&-\frac{1}{u}F^{ij;ml}h_{ij;s}h_{ml;s}+ (k-1)\phi|A|^2,
\end{array}
\end{equation}
which finishes the proof.
\end{proof}

The following lemma is the key for $C^2$-estimates.
\begin{lemm}\label{key lemma}
Let $\alpha=\frac{1}{k-1}$, if $(h_{ij})\in \Gamma_k$, then
\begin{equation}\label{key lemma equ2}
{(\sigma_k)^{ij,lm}h_{ij;s}h_{ij;s}}\le-{\sigma_k}\bigg[ \frac{(\sigma_k)_s}{\sigma_k} -\frac{(\sigma_1)_s}{\sigma_1} \bigg]\bigg[ (\alpha-1)\frac{(\sigma_k)_s}{\sigma_k}
 -(\alpha+1)\frac{(\sigma_1)_s}{\sigma_1}  \bigg].
\end{equation}
\end{lemm}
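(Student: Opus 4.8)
The plan is to reduce the inequality \eqref{key lemma equ2} to a pointwise algebraic statement about the eigenvalues $\lambda=(\lambda_1,\dots,\lambda_n)\in\Gamma_k$ of $(h_{ij})$ and the ``derivatives'' $b_i:=h_{ii;s}$ in a frame that diagonalizes $h$ at the point $P$. First I would recall the standard formula for the second derivatives of $\sigma_k$: for a diagonal matrix with distinct eigenvalues,
\[
(\sigma_k)^{ij,lm}\xi_{ij}\xi_{lm}=\sum_{i\ne j}\frac{\partial^2\sigma_k}{\partial\lambda_i\partial\lambda_j}\,\xi_{ii}\xi_{jj}-\sum_{i\ne j}\frac{\sigma_k^{ii}(\lambda)-\sigma_k^{jj}(\lambda)}{\lambda_i-\lambda_j}\,\xi_{ij}^2,
\]
where $\sigma_k^{ii}=\partial\sigma_k/\partial\lambda_i=\sigma_{k-1}(\lambda\mid i)$ and the cross terms (the second sum) are $\le 0$ on $\Gamma_k$ since $\sigma_k^{ii}$ is monotone in each variable there; hence they only help the desired upper bound and may be discarded. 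So the whole point is to control the ``diagonal'' part $\sum_{i\ne j}\frac{\partial^2\sigma_k}{\partial\lambda_i\partial\lambda_j}b_ib_j$. Writing this in terms of the one-variable derivatives via $\frac{\partial^2\sigma_k}{\partial\lambda_i\partial\lambda_j}=\sigma_{k-2}(\lambda\mid i,j)$ for $i\ne j$, the target becomes an inequality purely among $\sigma_{k-2}(\lambda\mid i,j)$, $\sigma_{k-1}(\lambda\mid i)$, $\sigma_k(\lambda)$ and the vectors $b$.

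Next I would rewrite both sides in a scale-invariant form. Since $\sigma_k$ is homogeneous of degree $k$ and $\sigma_1$ of degree $1$, the quantities $\tfrac{(\sigma_k)_s}{\sigma_k}=\sum_i\tfrac{\sigma_k^{ii}}{\sigma_k}b_i$ and $\tfrac{(\sigma_1)_s}{\sigma_1}=\tfrac{1}{\sigma_1}\sum_i b_i$ are the natural objects; set $P_k:=\sum_i\tfrac{\sigma_k^{ii}}{\sigma_k}b_i$ and $P_1:=\tfrac{1}{\sigma_1}\sum_i b_i$. The right-hand side of \eqref{key lemma equ2} is then $-\sigma_k(P_k-P_1)\big[(\alpha-1)P_k-(\alpha+1)P_1\big]$. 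The idea is that the ``Newton–MacLaurin / Maclaurin-type'' inequalities for elementary symmetric functions restricted to $\Gamma_k$ — specifically the divided-difference identities
\[
\sigma_{k-1}(\lambda\mid i)+\lambda_i\,\sigma_{k-2}(\lambda\mid i,j)=\sigma_{k-1}(\lambda\mid j)+\lambda_j\,\sigma_{k-2}(\lambda\mid i,j),\qquad
\sigma_k=\sigma_k(\lambda\mid i)+\lambda_i\sigma_{k-1}(\lambda\mid i),
\]
together with the concavity of $\sigma_k^{1/k}$ on $\Gamma_k$ (equivalently, the matrix $\big(\sigma_{k-2}(\lambda\mid i,j)-\tfrac{1}{k}\cdot\tfrac{(\sigma_{k-1}(\lambda\mid i))(\sigma_{k-1}(\lambda\mid j))}{\sigma_k}\big)$ has a definite sign as a bilinear form, with the diagonal $i=j$ interpreted via $\sigma_{k-1,k-1}(\lambda\mid i)$) — will turn the diagonal quadratic form $\sum_{i\ne j}\sigma_{k-2}(\lambda\mid i,j)b_ib_j$ into precisely the right-hand side plus the nonpositive cross terms. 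I would do this by: (i) completing the sum over $i\ne j$ to the full sum over all $i,j$ by adding and subtracting the diagonal $i=j$ contribution; (ii) using the concavity inequality for $F=\sigma_k^{1/k}$, i.e. $F^{ij,lm}\le \tfrac{1-k}{k}\,\tfrac{F^{ij}F^{lm}}{F}$ (as $(1-1/k)$ times rank-one), to replace $\sigma_k^{ij,lm}b_{ij}b_{lm}$ by an expression built from $(P_k)$ alone; (iii) handling the correction between the ``full-sum'' and ``off-diagonal-only'' conventions, which is where the $\sigma_1$-terms (hence $P_1$, hence the factor $\alpha=\tfrac1{k-1}$) enter, using $\partial^2\sigma_k/\partial\lambda_i^2=0$ so that the diagonal of $(\sigma_k)^{ij,lm}$ in the $\sigma_k$ (not $\sigma_k^{1/k}$) normalization vanishes; (iv) collecting terms and checking the resulting quadratic in $(P_k,P_1)$ matches $-\sigma_k(P_k-P_1)\big[(\alpha-1)P_k-(\alpha+1)P_1\big]$ by direct expansion. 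Step (iii)–(iv) is a bookkeeping computation comparing the two natural bilinear forms $(\sigma_k)^{ij,lm}$ and $(\sigma_k^{1/k})^{ij,lm}$.

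I expect the main obstacle to be making the sign of the cross terms and the exact coefficient matching work simultaneously: one must be careful that \emph{after} discarding the off-diagonal $-\sum_{i\ne j}\frac{\sigma_k^{ii}-\sigma_k^{jj}}{\lambda_i-\lambda_j}b_{ij}^2$ part (which is legitimate because it is $\le 0$ on $\Gamma_k$ by the monotonicity of $\sigma_{k-1}(\lambda\mid i)$), what remains — the diagonal part $\sum_{i,j}\sigma_{k-2}(\lambda\mid i,j)b_ib_j$ — still satisfies the claimed bound \emph{with} the precise right-hand side, with no slack lost. Concretely, the delicate point is the identity relating $\sum_i\sigma_{k-1,k-1}(\lambda\mid i)b_i^2$ (the true $i=j$ diagonal of $\partial^2\sigma_k$) to the quantities $\sigma_k$, $\sigma_1$, $P_k$, $P_1$; this is where the degree-one homogeneity of $F=\sigma_k^{1/k}$ (giving $\sum_i\lambda_i F^{ii}=F$ and $\sum_j\lambda_j F^{ij,\,\cdot\cdot}=0$ type relations) must be combined with the algebra of elementary symmetric polynomials. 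Once that identity is in hand, the final inequality is a one-line expansion of a perfect-square-type expression in $P_k-P_1$, and the constraint $(h_{ij})\in\Gamma_k$ is used only through $\sigma_k>0$ and the sign of the discarded off-diagonal terms.
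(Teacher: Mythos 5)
Your overall reduction (diagonalize $h$ at the point, split $(\sigma_k)^{ij,lm}h_{ij;s}h_{lm;s}$ into a diagonal part and off-diagonal terms, and discard the latter since $\frac{\sigma_k^{ii}-\sigma_k^{jj}}{\lambda_i-\lambda_j}=-\sigma_{k-2}(\lambda\mid ij)\le 0$ on $\Gamma_k$ — note your displayed second-derivative formula has the sign of that sum reversed, though your conclusion about its sign is the right one) is harmless, but the quantitative engine you propose for the remaining diagonal inequality cannot work. Write $P_k=\frac{(\sigma_k)_s}{\sigma_k}$, $P_1=\frac{(\sigma_1)_s}{\sigma_1}$. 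Concavity of $\sigma_k^{1/k}$, i.e. $(\sigma_k)^{ij,lm}\le\frac{k-1}{k}\frac{(\sigma_k)^{ij}(\sigma_k)^{lm}}{\sigma_k}$ as forms, only yields the upper bound $\frac{k-1}{k}\sigma_k P_k^{2}$, whereas the right-hand side of the lemma equals
\begin{equation*}
\frac{\sigma_k}{k-1}\Bigl[(k-2)P_k^{2}+2P_kP_1-kP_1^{2}\Bigr],
\end{equation*}
and the difference between the two is
\begin{equation*}
\frac{k-1}{k}\sigma_kP_k^{2}-\frac{\sigma_k}{k-1}\Bigl[(k-2)P_k^{2}+2P_kP_1-kP_1^{2}\Bigr]
=\frac{\sigma_k}{k(k-1)}\bigl(P_k-kP_1\bigr)^{2}\ \ge\ 0 .
\end{equation*}
So the lemma is strictly stronger than what $\sigma_k^{1/k}$-concavity gives whenever $P_k\ne kP_1$; for instance if $(\sigma_k)_s=0$ but $(\sigma_1)_s\ne0$ the lemma asserts the strictly negative bound $-\frac{k}{k-1}\sigma_k P_1^{2}$, while your bound is only $0$. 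Hence no amount of bookkeeping in your steps (iii)--(iv) can make the coefficients match, and your worry about ``the true $i=j$ diagonal of $\partial^2\sigma_k$'' is moot since $\partial^2\sigma_k/\partial\lambda_i^2=0$. The missing ingredient is a genuinely stronger concavity fact, not an identity.

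The paper's proof supplies exactly that ingredient: it uses the known concavity on $\Gamma_k$ of the quotient power $\bigl(\frac{\sigma_k}{\sigma_1}\bigr)^{\frac{1}{k-1}}$ (a Marcus--Lopes/Newton--MacLaurin-type result, strictly stronger than concavity of $\sigma_k^{1/k}$), expands $\frac{\partial^2}{\partial h_{ij}\partial h_{lm}}\bigl(\frac{\sigma_k}{\sigma_1}\bigr)^{\alpha}$ by the chain rule (using that $(\sigma_1)^{ij,lm}=0$), contracts with $h_{ij;s}h_{lm;s}$, and observes that the resulting quadratic expression in $P_k,P_1$ is \emph{identically} the right-hand side of (\ref{key lemma equ2}); no diagonalization, no discarding of terms, and the last step is an algebraic identity rather than an estimate. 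If you want to keep your route, you must first state and prove (or quote) the concavity of $\bigl(\frac{\sigma_k}{\sigma_1}\bigr)^{1/(k-1)}$ on $\Gamma_k$ — that is where the divided-difference and Newton--MacLaurin identities you list would actually be spent — after which the diagonal inequality you are aiming for is simply its expansion, and your reduction becomes unnecessary.
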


\begin{proof}
By the concavity of $\bigg(\displaystyle \frac{\sigma_k}{\sigma_1}\bigg)^\frac{1}{k-1}(h_{ij})$, we have
\begin{equation}\label{key lemma equ3}
\begin{array}{rll}
0\ge&\displaystyle \frac{\partial^2 }{\partial h_{ij}\partial h_{lm}}\displaystyle\bl\bigg( \frac{\sigma_k}{\sigma_1}\bigg)^\frac{1}{k-1}\br h_{ij;s}h_{lm;s}.

\end{array}
\end{equation}
To simplify notations, we denote $\alpha=\frac{1}{k-1}$. Direct computations yield,

\begin{equation}\label{key lemma equ4}
\begin{array}{rll}
0\ge&  \Dfrac{\partial^2 }{\partial h_{ij}\partial h_{lm}}\bigg(  \frac{\sigma_k}{\sigma_1}\bigg)^\alpha\cdot h_{ij;s}h_{lm;s}\\
\\
=&\alpha\bigg(  \frac{\sigma_k}{\sigma_1}\bigg)^\alpha
\bigg[\frac{(\sigma_k)^{ij,lm}}{\sigma_k}+\frac{(\alpha-1)(\sigma_k)^{ij}(\sigma_k)^{lm}}{\sigma_k^2}\\
&-\frac{2\alpha(\sigma_k)^{ij}(\sigma_1)^{lm}}{\sigma_k\sigma_1}
+\frac{(\alpha+1)(\sigma_1)^{ij}(\sigma_1)^{lm}}{\sigma_1^2}\bigg] h_{ij;s}h_{lm;s}\\
\end{array}
\end{equation}

Equivalently,
\begin{equation}\label{key lemma equ5}
\begin{array}{rll}
\frac{(\sigma_k)^{ij,lm} h_{ij;s}h_{lm;s}}{\sigma_k}\le&-\bigg[\frac{(\alpha-1)(\sigma_k)^{ij}(\sigma_k)^{lm}}{\sigma_k^2}
-\frac{2\alpha(\sigma_k)^{ij}(\sigma_1)^{lm}}{\sigma_k\sigma_1}\\
&+\frac{(\alpha+1)(\sigma_1)^{ij}(\sigma_1)^{lm}}{\sigma_1^2}\bigg] h_{ij;s}h_{lm;s}\\
\\
\le&-\bigg[ \frac{(\sigma_k)_s}{\sigma_k} -\frac{(\sigma_1)_s}{\sigma_1} \bigg]\bigg[ (\alpha-1)\frac{(\sigma_k)_s}{\sigma_k}
 -(\alpha+1)\frac{(\sigma_1)_s}{\sigma_1}  \bigg]
\end{array}
\end{equation}

\end{proof}

Note in Lemma \ref{key lemma}, one may replace $\sigma_k$ by any positive function $F$ with the property that
$(\frac{F}{\sigma_1})^{\alpha}$ is concave for some $\alpha>0$.
The following is a corollary of Lemma \ref{key lemma}.
\begin{coro}\label{key lemma coro}
If $\frac{(\sigma_1)_s}{\sigma_1}=\frac{(\sigma_k)_s}{\sigma_k} -r$ for some $r$,
\begin{equation}\label{key lemma coro equ2}
{(\sigma_k)^{ij,lm}h_{ij;s}h_{ij;s}}\le \max\bigg\{2r(\sigma_k)_s-\frac{k}{k-1}r^2\sigma_k,0\bigg\}
.
\end{equation}
\end{coro}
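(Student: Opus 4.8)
The plan is to reduce Corollary \ref{key lemma coro} to Lemma \ref{key lemma} by substituting the hypothesis $\frac{(\sigma_1)_s}{\sigma_1}=\frac{(\sigma_k)_s}{\sigma_k}-r$ directly into the right-hand side of (\ref{key lemma equ2}). Write $a=\frac{(\sigma_k)_s}{\sigma_k}$, so that $\frac{(\sigma_1)_s}{\sigma_1}=a-r$. Then the first bracket in (\ref{key lemma equ2}) becomes $a-(a-r)=r$, and the second bracket becomes $(\alpha-1)a-(\alpha+1)(a-r)=-2a+(\alpha+1)r$, using $\alpha=\frac{1}{k-1}$. Hence Lemma \ref{key lemma} gives
\[
(\sigma_k)^{ij,lm}h_{ij;s}h_{lm;s}\le -\sigma_k\,r\bigl(-2a+(\alpha+1)r\bigr)=2a r\,\sigma_k-(\alpha+1)r^2\sigma_k=2r(\sigma_k)_s-(\alpha+1)r^2\sigma_k,
\]
where in the last step I used $a\sigma_k=(\sigma_k)_s$. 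Since $\alpha+1=\frac{1}{k-1}+1=\frac{k}{k-1}$, this is exactly the quantity $2r(\sigma_k)_s-\frac{k}{k-1}r^2\sigma_k$ appearing in (\ref{key lemma coro equ2}).

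It then remains only to observe that this bound is trivially also bounded by the maximum of itself and $0$, which yields (\ref{key lemma coro equ2}) as stated. In other words, the corollary records the (slightly weaker but more convenient) estimate obtained by replacing the sharp bound by its positive part; no further argument is needed beyond the algebraic substitution above and the elementary inequality $t\le\max\{t,0\}$.

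The only point that requires a small amount of care is bookkeeping: making sure the two factors in the product on the right-hand side of (\ref{key lemma equ2}) are substituted correctly and that the homogeneity relation $(\sigma_k)_s=\frac{(\sigma_k)_s}{\sigma_k}\cdot\sigma_k$ is used to convert the $a\sigma_k$ term back into $(\sigma_k)_s$. There is no genuine obstacle here; the corollary is a direct specialization of the key lemma, and the $\max\{\cdot,0\}$ is inserted purely for the convenience of later applications where one does not wish to track the sign of $r$.
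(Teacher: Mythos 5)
Your substitution is correct and is exactly how the corollary follows from Lemma \ref{key lemma}: setting $a=\frac{(\sigma_k)_s}{\sigma_k}$, the two brackets become $r$ and $-2a+(\alpha+1)r$, giving the bound $2r(\sigma_k)_s-\frac{k}{k-1}r^2\sigma_k$, and the $\max\{\cdot,0\}$ only weakens this. The paper treats the corollary as an immediate consequence of the key lemma by this same direct specialization, so your argument matches the intended proof.
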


Combining Proposition \ref{C2 proposition} and Corollary \ref{key lemma coro}, we obtain the following $C^2$ estimates.

\begin{lemm}\label{Lemma C2}
If $M$ satisfies equation (\ref{cur equ 2}) and $2\le k\le n$, then there exists a constant $C$ depending only on $n$, $k$, $\min_{S^n}f$, $|f|_{C^1}$, and $|f|_{C^2}$, such that
\begin{eqnarray}\label{C2 Lemma inequ}
\max_{M}\sigma_1\le C,  \quad \nabla^2 \rho\le C.\end{eqnarray}
\end{lemm}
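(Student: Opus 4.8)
The plan is to run a maximum-principle argument on the test function $W=\frac{H}{u}=\frac{\sigma_1}{u}$, whose boundedness together with the already-established $C^1$ estimate (Lemma \ref{c1}) and the structure of $u$ (recall $u=\rho^2(\rho^2+|\nabla\rho|^2)^{-1/2}$ is bounded above and below away from zero) immediately yields a bound on $\sigma_1=\sum\kappa_i$ and hence, since $\kappa\in\Gamma_k\subset\Gamma_1$, on all principal curvatures; the bound $\nabla^2\rho\le C$ then follows from formula (\ref{2nd FF rho}) relating $h_{ij}$, $\rho$, $\nabla\rho$, $\nabla^2\rho$. So everything reduces to bounding $W$ at its maximum point $P$.

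At $P$ we feed Proposition \ref{C2 proposition} into the machinery: the key is to control the third-derivative term $-\frac1u F^{ij;ml}h_{ij;s}h_{ml;s}$, which is the usual obstruction in $C^2$ estimates for $\sigma_k$ equations when the right-hand side depends on gradient terms. Here is where Corollary \ref{key lemma coro} enters. Writing the equation as $\sigma_k=\phi u$ with $\phi=f/\rho^{n+1}$ evaluated appropriately (so $\phi$ is controlled in $C^2$ in terms of $f$ and the $C^1$ bound on $\rho$), at the critical point $W_i=0$ we have $(\sigma_1)_i/\sigma_1=u_i/u$, and differentiating $\sigma_k=\phi u$ gives $(\sigma_k)_i/\sigma_k=\phi_i/\phi+u_i/u$. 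Hence $(\sigma_1)_s/\sigma_1=(\sigma_k)_s/\sigma_k-r$ with $r=\phi_s/\phi$, which is exactly the hypothesis of Corollary \ref{key lemma coro} with a bounded $r$. Therefore $(\sigma_k)^{ij,lm}h_{ij;s}h_{ml;s}\le \max\{2r(\sigma_k)_s-\tfrac{k}{k-1}r^2\sigma_k,0\}$, and the right side is bounded by $C(1+\sigma_1)$ (using $(\sigma_k)_s=\phi_s u+\phi u_s$ and $u_s=h_{sl}\langle X,X_l\rangle$, which is $O(\sigma_1)$ via the $C^1$ bound). So $-\frac1uF^{ij;ml}h_{ij;s}h_{ml;s}\ge -C(1+\sigma_1)$.

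Plugging this into (\ref{C2 identity}): the left side $F^{ij}W_{ij}\le 0$ at the interior maximum $P$; on the right, the term $\frac1u[\phi_{ss}u+2\phi_su_s]$ is $O(1+\sigma_1)$, the term $-W\phi_l\langle X,X_l\rangle$ is $O(W)=O(\sigma_1)$, and $-(k-1)W\phi$ is lower order. The decisive good term is $(k-1)\phi|A|^2$: since $\phi\ge c>0$ and $|A|^2=\sum\kappa_i^2\ge \frac1n\sigma_1^2$ (again using $\kappa\in\Gamma_1$ so all $\kappa_i$ have controlled sign structure), this contributes a term of size $c'\sigma_1^2$. Combining, $0\le -c'\sigma_1^2+C(1+\sigma_1)$ at $P$, which forces $\sigma_1(P)\le C$, hence $W(P)\le C$, hence $W\le C$ everywhere. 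The main obstacle is the bookkeeping in the previous sentence: one must make sure, after the substitutions $u_s=h_{sl}\langle X,X_l\rangle$ and $(\sigma_k)_s=\phi_s u+\phi u_s$, that all the error terms are genuinely at most quadratic in $\sigma_1$ with the quadratic coefficient as small as one likes (or of the right sign), so that the $(k-1)\phi|A|^2\ge \frac{k-1}{n}\phi\,\sigma_1^2$ term dominates; in particular one should check the cross term $2\phi_s u_s/u$ and the Corollary's $2r(\sigma_k)_s$ term, both of which are linear-times-$\sigma_1$ and so harmless, but deserve care because $r$ and $\phi_s$ involve $\nabla\rho$ and one wants the constant to depend only on the quantities listed. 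The case $k=n$ is already covered classically (convex solutions, \cite{P2,O,GLy1}); for $2\le k<n$ the above argument applies verbatim, and Theorems \ref{main thm} and \ref{convexity thm} then follow by combining Lemma \ref{Lemma C2} with the $C^0,C^1$ estimates, Evans–Krylov and Schauder theory for the $C^{3,\alpha}$ bound, the uniqueness Lemma \ref{lemma uniqueness}, openness Lemma \ref{lemma openness}, and a degree/continuity argument.
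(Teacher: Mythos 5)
Your proposal is essentially the paper's own argument: the same test function $\frac{H}{u}$, the identity of Proposition \ref{C2 proposition} at the maximum point, the critical-point relation $\frac{(\sigma_1)_s}{\sigma_1}=\frac{(\sigma_k)_s}{\sigma_k}-\frac{\phi_s}{\phi}$ feeding Corollary \ref{key lemma coro} with $r=\phi_s/\phi$, error terms of size $C(1+|A|)$ beaten by the good term $(k-1)\phi|A|^2$, and then $\sigma_1\le u\,\frac{\sigma_1(P)}{u(P)}\le C$ globally. One small correction: passing from the $\sigma_1$ bound to a bound on all principal curvatures (and hence on $\nabla^2\rho$) is not justified by $\Gamma_k\subset\Gamma_1$ alone; as in the paper one uses $2\le k$ so that $\sigma_2>0$ and $|A|^2=\sigma_1^2-2\sigma_2\le\sigma_1^2$.
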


\begin{proof}
We have already obtained the $C^0$ and $C^1$ estimates for $\rho$. Therefore, to bound $\nabla^2\rho$, one only need to obtain
 an estimate for $\sigma_1(h^i_j)=g^{ij}h_{ij}$ in view of $\sigma_1^2=|A|^2+2\sigma_2$ and $\sigma_2>0$, where $h^i_j=g^{ik}h_{kj}$ is the Weingarten tensor.

If we denote $H\doteq \sigma_1$, then at a
 point $P$ where $\frac{H}{u}$ achieves its maximum, we have shown in (\ref{C2 identity})
\begin{equation}\label{C2 inequality}
\begin{array}{rll}

0\ge&F^{ij}\bl\frac{H}{u}\br _{ij}\\

=& \frac{1}{u}[\phi_{ss}u+2\phi_su_s]-\bl\frac{H}{u}\br \phi_l\<X,X_l\>-(k-1)\bl\frac{H}{u}\br \phi\\
&-\frac{1}{u}F^{ij;ml}h_{ij;s}h_{ml;s}+ (k-1)\phi|A|^2.
\end{array}
\end{equation}

Recall that $\phi(X)\in C^2(M)$ is defined as $\phi(X)=|X|^{-(n+1)}f(\frac{X}{|X|})$ and $C^0$, $C^1$ estimates of $\rho=|X|$ are already known. We have the following estimates.
\[
\begin{array}{rll}
|\phi_i|(P)\le& C(n, k, \min_{S^n}f, |f|_{C^1})
\\
|\phi_{ii}|(P)\le &C(n, k, \min_{S^n}f, |f|_{C^1},|f|_{C^2})\big(1+|A|(P)\big)
\end{array}
\]

On the other hand, $|u_i|=|h^i_j\rho\rho_j|\le c_3|A|$.
Observe that by equation (\ref{cur equ 2}), $\frac{\sigma_1}{u}=\frac{\sigma_1\phi}{\sigma_k}$. At a maximum point of the test function $\frac{\sigma_1}{u}$, one has $\frac{(\sigma_1)_s}{\sigma_1}=\frac{(\sigma_k)_s}{\sigma_k}-\frac{\phi_s}{\phi}$. In Corollary \ref{key lemma coro}, let $r=\frac{\phi_s}{\phi}(P)$, then
\[
\begin{array}{rll}F^{ij;ml}h_{ij;s}h_{ml;s}\le& 2r(u\phi)_{s}-\frac{k}{k-1}r^2u\phi\\
\le& C_1(n, k, \min_{S^n}f, |f|_{C^1})|A|+C_2(n, k, \min_{S^n}f, |f|_{C^1}).
\end{array}
\]

With the above estimates, we simplify (\ref{C2 inequality}) to be
\begin{equation}\label{}
\begin{array}{rll}

|A|^2(P)+c_4|A|(P)+c_5\le 0,
\end{array}
\end{equation}
where $c_4$ and $c_5$ are constants depending only on  $n$, $k$, $\min_{S^n}\phi$, $|f|_{C^1}$, and $|f|_{C^2}$.
Hence at $P$, $|A|(P)\le C$. In turn $\sigma_1(X)\le u(X)\frac{\sigma_1(P)}{u(P)}\le C$ for any $X\in M$. This implies (\ref{C2 Lemma inequ}). \end{proof}

\bigskip

\begin{rema} At this point, we would like to raise a question regarding global $C^2$ estimates for general curvature equations. To be precise, suppose $M\subset \mathbb R^{n+1}$ is a compact smooth hypersurface satisfying equation
\begin{equation} \sigma_k(\kappa(X))=f(X, \nu),
\kappa(X)\in \Gamma_k,\ \ \ \ \forall X\in M,\end{equation}
where $f\in C^2(\mathbb R^{n+1}\times \mathbb S^n)$ is a general positive function. Suppose there is a priori $C^1$ bound of $M$, can one conclude a $C^2$
a priori bound of $M$ in terms of $C^1$ norm of $M$, $f$, $n,k$? When $k=1$, the answer is affirmative. It follows from theories
 of quasilinear elliptic equations, in particular that of
 the mean curvature type equation. When $k=n$, this is also true (e.g., Theorem 5.1 in \cite{GLM}), it is parallel to a well known fact for Monge-Amp\`ere equation.

A similar question can also be asked for Hessian equations ($1<k<n$).
 Suppose that $u$ is an admissible solution of equation
\begin{equation} \sigma_k(\nabla^2 u)=f(x, u, \nabla u), \forall x\in \Omega \subset \mathbb R^n,\end{equation}
where $f\in C^2(\Omega \times \mathbb R \times \mathbb R^n)$ is a general positive function, is it true that there is $C$ depending only on $n, k, \|f\|_{C^{1,1}}, \|1/f\|_{C^0}, \|u\|_{C^{0,1}(\bar \Omega)}$ such that
\begin{equation}\label{c2hyp} \max_{x\in \Omega}|\nabla^2 u(x)|\le C
\big(1+ \max_{x\in \partial \Omega}|\nabla^2u(x)|\big)?\end{equation}
\end{rema}

\bigskip

Now we are ready to prove the main theorems of this paper.

\noindent {\bf Proof of Theorem \ref{main thm}.}
For any positive function $f\in C^2(\mathbb S^n)$, for $0\le t\le 1$ and $1< k< n-1$, set $f_t(x)=[1-t+tf^{-\frac{1}{k}}(x)]^{-k}$. We consider the following family of equations for $0\le t\le 1$:
\begin{equation}\label{continuity method}
\sigma_k(\kappa_1,\cdots,\kappa_n)(x)=f_t(x)\rho^{1-n}(\rho^2+|\nabla \rho|^2)^{-1/2}, \quad {\rm on\ }\mathbb S^n,
\end{equation}
where $n\ge2$. We want to find admissible solutions in the class of star-shaped hypersurfaces. Let $I=\{t\in [0,1]: {\rm such\ that\ }
(\ref{continuity method})\  {\rm is\ solvable}\}$. $I$ is nonempty because $\rho=[C^k_n]^{-\frac{1}{n-2}}$ is a solution for $t=0$. By the a priori estimates of lemmas \ref{c1} and \ref{Lemma C2}, the Evans-Krylov theorem and the
Schauder theorem, $\rho\in C^{3,\alpha}(\mathbb S^n)$ and
\[
\|\rho\|_{C^{3,\alpha}(\mathbb S^n)}\le C,
\]
where $C$ depends only on  only on $n$, $k$, $\min_{S^n}f$, $\max_{S^n}f$, $|f|_{C^1}$, $|f|_{C^2}$ and $\alpha$. The a priori estimates guarantee
that  $I$ is closed. The openness comes from Lemma \ref{lemma openness} and
the implicit function theorem. This proves the existence
part of the theorem. The uniqueness part of the theorem follows
from  Lemma \ref{lemma uniqueness}.
The proof of  Theorem \ref{main thm} is completed. \qed

\bigskip

To prove Theorem \ref{convexity thm}, we need the following Constant Rank Theorem in \cite{GLM}.
\begin{theo}\label{GLM thm}
\cite{GLM}  Suppose $M$ is a convex hypersurface and satisfies equation (\ref{cur equ 2}) for $k<n$ with the second fundamental form $W=\{h_{ij}\}$ and $|X|^\frac{n+1}{n}f(\frac{X}{|X|})$ is convex in $\mathbb R^{n+1}\backslash \{0\}$, then $W$ is positive definite.
\end{theo}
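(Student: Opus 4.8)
The plan is to prove this by the microscopic convexity principle, that is, the constant rank method of Caffarelli--Friedman and, in the fully nonlinear form, of Bian--Guan, applied to the second fundamental form $W=\{h_{ij}\}$ of $M$, viewed as a nonnegative symmetric $2$-tensor. First I would recast equation (\ref{cur equ 2}): with $\psi(X)=\langle X,\nu\rangle\,|X|^{-(n+1)}f(X/|X|)$ it reads $\sigma_k(\kappa)=\psi$, and on the convex hypersurface $M$ one may rewrite it as $F(W)=\widetilde\psi$ with $F$ concave and homogeneous of degree one in $W$ (for instance $F=\sigma_k^{1/k}$, concave for $W\ge0$), $\widetilde\psi$ being the matching modification of $\psi$. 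The only role of the hypothesis that $|X|^{\frac{n+1}{n}}f(X/|X|)$ be convex is to control the second covariant derivatives of $\widetilde\psi$ in the degenerate directions: using $u=\langle X,\nu\rangle$, the homogeneity, and the Gauss and Weingarten formulas of Section 2, I would show that for every unit vector $e\in\ker W$ one has $\widetilde\psi_{ee}\ge-C(\widetilde\psi+|\nabla\widetilde\psi|)$, i.e. the part of the Hessian of the right-hand side in the degenerate directions has the sign that allows it to be absorbed below.

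Next, set $\ell=\min_{X\in M}\rank W(X)$; the goal is $\ell=n$. Assuming $\ell<n$, fix $X_0$ with $\rank W(X_0)=\ell$, choose near $X_0$ an orthonormal frame diagonalizing $W$ with the $\ell$ large eigenvalues bounded below and the other $n-\ell$ eigenvalues small, and introduce the Bian--Guan test function $\Phi=\sigma_{\ell+1}(W)+\sigma_{\ell+2}(W)/\sigma_{\ell+1}(W)$ (with the usual convention when $\sigma_{\ell+1}(W)=0$), which is nonnegative on $M$ and vanishes at $X_0$. Differentiating $F(W)=\widetilde\psi$ twice, commuting covariant derivatives by the Codazzi and Gauss identities of Section 2, and using the concavity of $F$ together with the combinatorial inequalities of Bian--Guan for $\sigma_{\ell+1}$ and $\sigma_{\ell+2}$, one should reach, on a neighborhood of $X_0$,
\[
F^{ij}\Phi_{ij}\le C\big(\Phi+|\nabla\Phi|\big),\qquad (F^{ij})>0,
\]
where the contribution of $\widetilde\psi_{ee}$, $e\in\ker W$, is exactly the term estimated in the previous paragraph. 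Since $\Phi\ge0$ and $\Phi(X_0)=0$, Bony's strong maximum principle forces $\Phi\equiv0$ near $X_0$, so $W$ has constant rank $\ell$ there, and a connectedness argument extends this to $\rank W\equiv\ell$ on all of $M$.

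Finally I would exclude $\ell<n$. With $\rank W\equiv\ell$, the kernel $V=\ker W$ is a smooth distribution of dimension $n-\ell\ge1$, and the standard argument from the constant rank literature (using $Wv=0$, $W\ge0$, and Codazzi) shows $V$ is parallel on $M$. Then a geodesic $\gamma$ of $M$ with $\dot\gamma(0)\in V$ keeps its velocity in $V$, so $h(\dot\gamma,\dot\gamma)\equiv0$ along $\gamma$, and the Gauss formula $X_{ij}=-h_{ij}\nu$ gives $\ddot\gamma\equiv0$, i.e. $\gamma$ is a straight line in $\mathbb R^{n+1}$; since $M$ is compact its geodesics are complete, so $M$ would contain an entire line, which is impossible. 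Hence $\ell=n$, and $W\ge0$ of full rank is positive definite. The step I expect to be the main obstacle is the second one: checking that \emph{every} term in the differential inequality for $\Phi$ has the right sign, and in particular that the convexity of $|X|^{\frac{n+1}{n}}f(X/|X|)$ is exactly what cancels the second derivatives of the right-hand side in the degenerate directions. This is delicate because $\psi$ depends on $X$ both directly and through $\nu$, so that term couples $f$, the support function, and $W$ itself, and must be expanded with care.
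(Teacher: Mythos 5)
You should first note that the paper itself contains no proof of this statement: Theorem \ref{GLM thm} is quoted from \cite{GLM}, so the only proof to compare with is the one there, and your outline does follow the same general strategy (a Caffarelli--Friedman/Bian--Guan constant rank argument applied to $W$, with the test function $\sigma_{\ell+1}+\sigma_{\ell+2}/\sigma_{\ell+1}$ and a strong maximum principle). As a proof, however, your proposal has a genuine gap, and it sits exactly where you yourself flag it: the claim that the convexity hypothesis on the data yields $\widetilde\psi_{ee}\ge -C(\widetilde\psi+|\nabla\widetilde\psi|)$ in the null directions is only announced (``I would show''), never verified, and that verification is the entire analytic content of the theorem. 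Since $\psi=u\,|X|^{-(n+1)}f(X/|X|)$ depends on $\nu$ through the support function, differentiating the equation twice in a direction $e\in\ker W$ produces, via $u_{ee}=h_{ee;l}\langle X,X_l\rangle+h_{ee}-(h^2)_{ee}u$ and the Weingarten relations, terms coupling $f$, $u$, $\nabla W$ and $W$ itself, and one must check--after organizing the equation around the correct homogeneous quantity, which in \cite{GLM} is $\bigl(|X|^{n+1}/f\bigr)^{1/k}$--that every such term is controlled by $C(\Phi+|\nabla\Phi|)$. Without this, your first two steps are the by-now-standard constant-rank boilerplate and prove nothing specific to equation (\ref{cur equ 2}). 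A symptom of the gap: the structural condition actually used in \cite{GLM}, and in Theorems A, B and Theorem \ref{convexity thm} of this paper, is convexity of $|X|^{\frac{n+1}{k}}f(X/|X|)^{-\frac1k}$, i.e. it involves $f^{-1/k}$, not $f$; you take the condition ``$|X|^{\frac{n+1}{n}}f$ convex'' as printed at face value, and because you never carry out the degenerate-direction computation you have no means of detecting which hypothesis is the one that actually closes the argument.

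There are also two smaller problems in your endgame. First, constant rank does not make $V=\ker W$ a parallel distribution on $M$; the relative nullity distribution is integrable with totally geodesic leaves (and is parallel along the leaves), but not parallel in general--a cone is already a counterexample--so the correct statement is only that a geodesic starting tangent to $\ker W$ remains in a leaf and is a straight line of $\mathbb R^{n+1}$, which indeed contradicts compactness. Second, this whole foliation discussion is unnecessary: a closed hypersurface always has an elliptic point (at a point of maximal distance from the origin $M$ lies inside a sphere of radius $R=\max\rho$, so all principal curvatures there are at least $1/R>0$); hence once the rank of $W$ is constant it must equal $n$. This is the cleaner way to pass from the constant rank statement to the positive definiteness asserted in the theorem.
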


\noindent{\bf Proof of Theorem \ref{convexity thm}.}
By the proof of Theorem \ref{main thm}, there exits a unique admissible solution to (\ref{cur equ 2}). We need to prove the strict convexity of
 solutions under the weak convexity condition (\ref{convex cond1}) of $|X|^\frac{n+1}{k}f\big( \frac{X}{|X|} \big)^{-\frac{1}{k}}$. Using the same deformation path as in the proof of Theorem \ref{main thm}, we start from the sphere solution with $\rho=[C^k_n]^{\frac{1}{n-2}}$ and
prove the theorem by contradiction. Suppose that
 the strict convexity is not true. By continuity, there must exist a
 first $t_0\in[0,1]$ such that the second fundamental form $W(x,t)>0$ for all $x\in M$, $t\in [0,t_0)$. Moreover, at $t=t_0$, $W(x,t_0)\ge0$ for all $x\in M$ and there exists $x_0\in M$ such that $W(x_0,t_0)=0$. However, since the hypersurface $M(t_0)$ is convex, by Theorem \ref{GLM thm}, $M(x,t_0)$ must be strictly convex for any $x\in M(t_0)$ which contradicts $W(x_0,t_0)=0$. This proves that the solution hypersurface to (\ref{cur equ 2}) must be strictly convex. \qed

\bigskip

The condition in Theorem \ref{convexity thm} can be weakened further following the same compactness argument in the proof of Theorem 5 in \cite{gm1}.

\begin{theo}\label{convexity thm1}
Suppose $1\le k< n$ and $f\in  C^2(\mathbb S^n)$ is a positive function. There is $\delta>0$ depending
only on $n, k, \|f\|_{C^{1,1}}, \|1/f\|_{C^0}$ such that, if $f$ satisfies
\begin{equation}\label{convex cond11}
|X|^\frac{n+1}{k}f\big( \frac{X}{|X|} \big)^{-\frac{1}{k}} +\delta |X|^2 {\rm\ is\ a\ convex\ function\ in\  } \mathbb R^{n+1}\backslash \{ 0\},
\end{equation}
then there exists a unique strictly convex hypersurface $M\in C^{3,\alpha}$, $\alpha\in (0,1)$ such that it satisfies (\ref{cur equ}).
\end{theo}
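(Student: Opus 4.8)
\medskip

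\noindent\textbf{Proof of Theorem~\ref{convexity thm1} (sketch).}
The plan is to deduce the statement from Theorem~\ref{convexity thm} by a compactness argument, in the spirit of the proof of Theorem~5 in \cite{gm1}. Fix $n,k$ with $1\le k<n$ and a constant $\Lambda\ge 1$; I will produce $\delta=\delta(n,k,\Lambda)>0$ that works for every positive $f\in C^2(\mathbb S^n)$ with $\|f\|_{C^{1,1}}+\|1/f\|_{C^0}\le\Lambda$ satisfying \eqref{convex cond11}, which yields the theorem on taking $\Lambda=\|f\|_{C^{1,1}}+\|1/f\|_{C^0}$. Suppose no such $\delta$ exists. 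Then for each $j\in\mathbb N$ there is a positive $f_j\in C^2(\mathbb S^n)$ with $\|f_j\|_{C^{1,1}}+\|1/f_j\|_{C^0}\le\Lambda$, satisfying \eqref{convex cond11} with $\delta=1/j$, whose unique admissible solution $M_j$ (Theorem~\ref{main thm}) fails to be strictly convex.

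For each $j$ I would deform along $f_{j,t}=\bl 1-t+tf_j^{-1/k}\br^{-k}$, $t\in[0,1]$, the path used in the proof of Theorem~\ref{main thm}; by Theorem~\ref{main thm} there is, for every $t$, a unique admissible solution $M_j(t)\in C^{3,\alpha}$, with $M_j(0)$ the round sphere $\rho\equiv r_0(n,k)>0$ and $M_j(1)=M_j$. The crucial observation is that
\[
|X|^{\frac{n+1}{k}}f_{j,t}\bl\tfrac{X}{|X|}\br^{-\tfrac1k}+\tfrac{t}{j}|X|^2=(1-t)\,|X|^{\frac{n+1}{k}}+t\,\bigl(|X|^{\frac{n+1}{k}}f_j\bl\tfrac{X}{|X|}\br^{-\tfrac1k}+\tfrac1j|X|^2\bigr)
\]
is a convex combination of $|X|^{\frac{n+1}{k}}$ (convex since $\tfrac{n+1}{k}\ge 1$) and the function $|X|^{\frac{n+1}{k}}f_j(X/|X|)^{-1/k}+\tfrac1j|X|^2$, convex by \eqref{convex cond11}; hence each $f_{j,t}$ satisfies \eqref{convex cond11} with parameter $t/j\le 1/j$. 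Moreover $\|f_{j,t}\|_{C^{1,1}}+\|1/f_{j,t}\|_{C^0}\le C(\Lambda)$ uniformly in $j,t$ because $f_{j,t}^{-1/k}=(1-t)+tf_j^{-1/k}$. Letting $\kappa_j(t)$ be the smallest principal curvature over $M_j(t)$, the a priori estimate \eqref{c2est} together with uniqueness make $t\mapsto\kappa_j(t)$ continuous; since $\kappa_j(0)=1/r_0>0$ and $\kappa_j(1)\le 0$, there is a first $t_j\in(0,1]$ with $\kappa_j(t_j)=0$, so $M_j(t_j)$ is convex but has a point $x_j$ where some principal curvature vanishes.

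Next I would pass to the limit. Applying \eqref{c2est} to the data $f_{j,t_j}$ — and here it is essential that the constant in \eqref{c2est} depends on $f_{j,t_j}$ only through $\|f_{j,t_j}\|_{C^{1,1}}$ and $\|1/f_{j,t_j}\|_{C^0}$ — gives $\|\rho_j(\cdot,t_j)\|_{C^{3,\alpha}(\mathbb S^n)}\le C(n,k,\Lambda,\alpha)$. Passing to a subsequence, $t_j\to t_\infty$, $\rho_j(\cdot,t_j)\to\rho_\infty$ in $C^{3,\beta}$ for every $\beta<\alpha$, $f_{j,t_j}\to f_\infty$ in $C^{1,\beta}$, and $x_j\to x_\infty$. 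Then $\rho_\infty>0$ solves \eqref{cur equ 2} with data $f_\infty$; $f_\infty\ge 1/\Lambda>0$ and $f_\infty\in C^{1,1}(\mathbb S^n)$; letting $j\to\infty$ (so $t_j/j\to 0$) in the convexity displayed above shows $|X|^{(n+1)/k}f_\infty(X/|X|)^{-1/k}$ is convex on $\mathbb R^{n+1}\setminus\{0\}$, i.e.\ $f_\infty$ satisfies \eqref{convex cond1}; and all principal curvatures of the limiting hypersurface $M_\infty$ are $\ge 0$ while one vanishes at $x_\infty$, so $M_\infty$ is convex but not strictly convex. This contradicts the Constant Rank Theorem~\ref{GLM thm} (in the form used in the proof of Theorem~\ref{convexity thm}), which forces the second fundamental form of such an $M_\infty$ to be positive definite. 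Hence the desired $\delta=\delta(n,k,\Lambda)$ exists; the uniqueness and the $C^{3,\alpha}$ regularity of the solution for the original $C^2$ datum $f$ are those of Theorem~\ref{main thm}.

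I expect the one point needing care — beyond this routine normal-families bookkeeping — to be that the limiting datum $f_\infty$ is in general only $C^{1,1}$, so one must know that both the estimate \eqref{c2est} and the Constant Rank Theorem~\ref{GLM thm} remain valid with merely $C^{1,1}$ right-hand side; this is precisely why Theorem~\ref{main thm} is stated with a $C^{1,1}$-dependent constant, and the constant rank statement is likewise robust. Alternatively, one may first approximate $|X|^{(n+1)/k}f_\infty(X/|X|)^{-1/k}$ by smooth, convex, positively $(n+1)/k$-homogeneous functions and pass to one further limit.
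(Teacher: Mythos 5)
Your argument is correct, and it reaches the conclusion by a route that is organized differently from the paper's. The paper's proof is shorter: it takes a contradiction sequence $f_l$ (satisfying \eqref{convex cond11} with $\delta=1/l$) whose unique admissible solutions $M_l$ fail strict convexity at points $x_l$, uses the estimate of Theorem~\ref{main thm} (whose constant depends only on $\|f\|_{C^{1,1}}$, $\|1/f\|_{C^0}$) to extract limits $f_l\to f$, $\rho_l\to\rho$, $x_l\to x_0$, notes that the limit $f$ satisfies the unperturbed condition \eqref{convex cond1}, and then invokes Theorem~\ref{convexity thm} itself: by uniqueness of admissible solutions the limit hypersurface is the solution for $f$, hence strictly convex, contradicting $\kappa_{j(l)}(x_l)\le 0$ in the limit. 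You instead unfold the proof of Theorem~\ref{convexity thm} inside the compactness argument: for each $j$ you run the deformation $f_{j,t}$, locate the first time $t_j$ at which the minimal principal curvature vanishes (using the convex-combination observation to check \eqref{convex cond11} along the path, which is a nice explicit verification the paper leaves implicit), and pass to a limit that is convex but degenerate, contradicting the Constant Rank Theorem~\ref{GLM thm} directly. What your version buys is that the limiting hypersurface is genuinely convex, so the constant rank theorem applies verbatim; the paper's limit could a priori have a negative principal curvature, which is why it must route the contradiction through Theorem~\ref{convexity thm} (uniqueness plus the already-proved strict convexity) rather than through constant rank. What the paper's version buys is brevity and no need for the continuity-in-$t$ of the solution family or the first-degenerate-time argument, which in your sketch are asserted but would need the standard uniqueness-plus-compactness justification. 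The regularity caveat you flag (the limit datum is only $C^{1,1}$ while Theorem~\ref{convexity thm}, respectively Theorem~\ref{GLM thm}, is stated for $C^2$ data) is present in the paper's proof as well and is glossed over there; your suggested mollification of the homogeneous extension is a reasonable way to close it, so this is not a gap specific to your approach.
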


\noindent {\bf Proof.} Suppose it is not true. In view of Theorem \ref{main thm}, there is a constant $C>0$ and a sequence of positive functions $f_l$ on $\mathbb S^n$ with
\begin{eqnarray*} && \|f_l\|_{C^{1,1}}\le C, \quad \|\frac1{f_l}\|_{C^{0}}\le C,\\
&& |X|^\frac{n+1}{k}f_l\big( \frac{X}{|X|} \big)^{-\frac{1}{k}} +\frac1l |X|^2 {\rm\  convex\  in\  } \mathbb R^{n+1}\backslash \{ 0\},\end{eqnarray*}
and a sequence of admissible solutions $M_l=\{X=\rho_l(\frac{X}{|X|})X\}$ with principal curvature vector
$\kappa^l$ satisfying
\[\sigma_k(\kappa^l_1,\cdots,\kappa^l_n)=\frac{f_l}{\rho_l^{n-1}\sqrt{\rho_l+|\nabla \rho_l|^2}},\]
and a sequence of points $x_l\in \mathbb S^n$ such that
\begin{equation}\label{kappa} \kappa^l_{j(l)}(x_l)\le 0.\end{equation}
Again by Theorem \ref{main thm}, there is
some $\tilde C$ depending on on $k, n, \|f_l\|, \|\frac1{f_l}\|_{C^{0}}, \alpha$ such that
\[\|\rho\|_{C^{3,\alpha}}\le \tilde C.\]
By the compactness, there exist subsequences which we still denote $x_{l}, f_{l}, \rho_{l}$ such that
\[ x_{l}\to x_0\in \mathbb S^n, \quad f_{l}\to f \in C^{1,\alpha}, \quad \rho_{l}\to \rho \in C^{3,\alpha}\]
with $\rho$ satisfying equation (\ref{cur equ}). Clearly, $f\in C^{1,1}$ and it satisfies condition (\ref{convex cond1}). By Theorem \ref{convexity thm}, $M$ is strictly convex. But from (\ref{kappa}), $M$ is not strictly convex at $x_0$. This is a contradiction. \qed


\begin{thebibliography}{99}
\bibitem{A1}Alexandrov, A. D. {\em Zur theorie der gemishchten volumina von knovexen korpern, 2,} Mathematics of the USSR Sbornik 2 (1937), 1205-38.

\bibitem{A2}Alexandrov, A. D. {\em Existence and uniqueness of a convex surface with a given integral curvature},  Doklady Akademii Nauk Kasah SSSR 36 (1942), 131-34.

\bibitem{B}Berg, Christian. {\em Corps convexes et potentiels sph\"eriques}, (French)
Mat.-Fys. Medd. Danske Vid. Selsk. 37 1969 no. 6, 64 pp. (1969).

\bibitem{CNS}
Caffarelli, Luis, Nirenberg, Louis, and Spruck, Joel. {\em The Dirichlet
problem for nonlinear second order elliptic equations, III:
Functions of the eigenvalues of the Hessian} Acta Math. 155
(1985), 261-301.

\bibitem{CNS85}
Caffarelli, Luis, Nirenberg, Louis, and Spruck, Joel.
{\em Nonlinear second order elliptic equations IV:
\quad Starshaped compact Weigarten hypersurfaces},
 Current topics in partial differential equations, Y.Ohya, K.Kasahara
and N.Shimakura (eds), Kinokunize, Tokyo, 1985, 1-26.

\bibitem{CY}Cheng, Shiu Yuen; Yau, Shing Tung. {\em On the regularity of the solution of the $n$-dimensional Minkowski problem}, Comm. Pure Appl. Math. 29 (1976), no. 5, 495-516.

\bibitem{Fe}Federer, Herbert. {\em Curvature measures}, Trans. Amer. Math. Soc. 93 (1959) 418-491.

\bibitem{F}Firey, William J. {\em Christoffel's problem for general convex bodies},
Mathematika 15 1968 7-21.

\bibitem{GLy}Guan, Pengfei; Li, YanYan. {\em Unpublished research notes}, 1995

\bibitem{GLy1}Guan, Pengfei; Li, YanYan. {\em $C^{1,1}$ estimates for solutions of a problem of Alexandrov}, Comm. Pure and Appl. Math., 50, (1997), 189-811.

\bibitem{GLM}Guan, Pengfei; Lin, Changshou; Ma, Xinan. {\em The Existence of Convex Body with Prescribed Curvature Measures}, International Mathematics Research Notices, Vol. 2009, (2009) 1947-1975.


\bibitem{GM}Guan, Pengfei; Ma, Xinan. {\em The Christoffel-Minkowski problem. I. Convexity of solutions of a Hessian equation,} Invent. Math. 151 (2003), no. 3, 553-577.

\bibitem{gm1} Guan, Pengfei; Ma, Xinan. {\em Convex solutions of fully nonlinear
elliptic equations in classical differential geometry}, Contemp Math. V.367, AMS. (2004), 115-128.

\bibitem{L}Lewy, Hans. {\em On differential geometry in the large. I. Minkowski's problem},
Trans. Amer. Math. Soc. 43 (1938), no. 2, 258-270.

\bibitem{N} Nirenberg, Louis.
{\em The Weyl and Minkowski problems in differential geometry in
the large}, Comm. Pure and Appl. Math., {\bf 6},(1953), pp.
337-394.

\bibitem{O} Oliker, V. I. {\em Existence and uniqueness of convex hypersurfaces with prescribed Gaussian curvatrue in spaces of constant curvature}. Sem. Inst. Mate. Appl. ``Giovanni Sansone", Univ. Studi Firenze, 1983.

\bibitem{P1} Pogorelov, A. V. {\em Regularity of a convex surface with given Gaussian curvature}, (Russian)
Mat. Sbornik N.S. 31(73), (1952). 88-103.

\bibitem{P2}Pogorelov, A. V. {\em Extrinsic geometry of convex surfaces}, ``Nauka'', Moscow,
1969; English transl., Transl. Math. Mono., Vol. 35, Amer. Math.
Soc., Providence, R.I., 1973.

\bibitem{P3} Pogorelov, A. V. {\em The Minkowski multidimensional problem.}
Translated from the Russian by Vladimir Oliker. Introduction by Louis Nirenberg. Scripta Series in Mathematics. V. H. Winston \& Sons, Washington, D.C.; Halsted Press [John Wiley \& Sons], New York-Toronto-London, 1978.

\bibitem{sch} Schneider, Rolf. {\em Convex bodies: the Brunn-Minkowski theory. }
Encyclopedia of Mathematics and its Applications, 44. Cambridge University Press, Cambridge, 1993.
\end{thebibliography}
\end{document}